\documentclass[12pt]{article}
\usepackage{amsmath}
\usepackage{amsthm}
\usepackage{amsfonts}
\usepackage{amssymb}
\usepackage{latexsym}

\long\def\comment #1\commentend{}



\newtheorem{thm}{Theorem}[section]
\newtheorem{cor}[thm]{Corollary}
\newtheorem{lem}[thm]{Lemma}
\newtheorem{claim}[thm]{Claim}
\newtheorem{prop}[thm]{Proposition}

\numberwithin{equation}{section}

\newcommand{\RR}{\mathbb{R}}
\newcommand{\R}{\mathbb{R}}
\newcommand{\EE}{\mathbb{E}}
\newcommand{\EL}{\mathcal{E}}

\newcommand{\RN}{\RR^n}
\newcommand{\RK}{{\RR^k}}
\newcommand{\BK}{{B_2^k}}

\newcommand{\SN}{S^{n-1}}
\newcommand{\SK}{{S^{k-1}}}

\newcommand{\halfn}{\lfloor \frac{n}{2} \rfloor}
\newcommand{\NN}{\mathbb{N}}

\newcommand{\sumn}[1]{{\sum_{#1=1}^n}}
\newcommand{\sumk}[1]{{\sum_{#1=1}^k}}

\newcommand{\norm}[1]{\lVert #1\rVert}
\newcommand{\pnorm}[2]{\lVert #1\rVert_{#2}}

\newcommand{\modulus}[1]{\lvert #1\rvert}

\newcommand{\remark}{\noindent \textbf{Remark: }}

\newcommand{\e}{\epsilon}

\begin{document}

\title{Euclidean sections of convex bodies\\
 \normalsize Series of lectures given in
Bedlewo,
Poland, July 6-12, 2008\\ and in Kent, Ohio, August
13-20, 2008 }
\author{Gideon Schechtman\footnote {supported in part by the Israel Science Foundation}}
\date{}

\maketitle

This is a somewhat expanded form of a four hours course given, with small variations,  first at
the educational workshop Probabilistic methods in Geometry,
Bedlewo, Poland, July 6-12, 2008 and a few weeks later at the
Summer school on Fourier analytic and probabilistic methods in
geometric functional analysis and convexity, Kent, Ohio, August
13-20, 2008.\\
The main part of these notes gives yet another exposition
of Dvoretzky's theorem on Euclidean sections of convex bodies with a
proof based on Milman's. This material is by now quite standard.
Towards the end of these notes we discuss issues related to fine
estimates in Dvoretzky's theorem and there there are some results that
didn't appear in print before. In particular there is an exposition of
an unpublished result of Figiel (Claim \ref{claim:figiel}) which gives an upper bound
on the possible dependence on $\e$ in Milman's theorem. We would like
to thank Tadek Figiel for allowing us to include it here. There is
also a better version of the proof of one of the results from \cite{sc2}
giving a lower bound on the dependence
on $\e$ in Dvoretzky's theorem. The improvement is in the statement and proof of Proposition \ref{prop:main} here which is a stronger version of the corresponding Corollary 1 in \cite{sc2}.

\medskip

\section {Lecture 1}
By a convex, symmetric body $K\subset \RN$ we shall refer to a
compact set with non-empty interior which is convex and symmetric
about the origin (i.e, $x\in K$ implies that $-x\in K$.

This series of lectures will revolve around the following theorem of Dvoretzky.
\begin{thm}\label{thm_dvoretzky_0} (A. Dvoretzky, 1960)
There is a function $k:(0,1)\times\NN\to\NN$ satisfying, for all $0<\varepsilon<1$, $k(\varepsilon,n)\to\infty$ as $n\to\infty$,
such that for every $0<\varepsilon<1$, every $n\in \NN$ and every convex symmetric body in
$K\subset \RN$ there exists a subspace $V\subseteq \RN$
satisfying:
\begin{enumerate}
\item $\dim V = k(\varepsilon,n)$.
\item $V\cap K$ is ``$\varepsilon$-euclidean", which means that there
exists $r>0$ such that:
\[
r\cdot V\cap B_2^n \subset V\cap K \subset (1+\varepsilon)r\cdot
V\cap B_2^n.
\]
\end{enumerate}
\end{thm}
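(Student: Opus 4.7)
The plan is to follow Milman's approach, whose core is the concentration of measure on the Euclidean sphere. Identify $K$ with the unit ball of a norm $\norm{\cdot}$ on $\RN$, so that the $\varepsilon$-Euclidean condition for $V\cap K$ translates into the statement that $\norm{\cdot}$ restricted to $V$ is within a factor of $1+\varepsilon$ of some multiple of the Euclidean norm on $V$. Introduce the two central parameters
\[
M \;=\; \int_{\SN}\norm{x}\,d\sigma(x), \qquad b \;=\; \max_{x\in \SN}\norm{x},
\]
with $\sigma$ the normalized Haar measure on $\SN$; observe that $x\mapsto\norm{x}$ is $b$-Lipschitz with respect to the Euclidean metric.

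The engine is Levy's isoperimetric inequality on the sphere, which yields the concentration estimate
\[
\sigma\bigl\{x\in\SN:\bigl|\norm{x}-M\bigr|>tM\bigr\} \;\leq\; 2\exp\bigl(-c\,n\,t^{2}M^{2}/b^{2}\bigr)
\]
for an absolute constant $c>0$. To upgrade this pointwise concentration to uniform control on a subspace, fix a target dimension $k$ and pick $V$ at random according to the rotation-invariant measure on the Grassmannian of $k$-planes. Take a $\delta$-net $\mathcal{N}$ on $S_V := V\cap\SN$ with $\delta$ of order $\varepsilon/b$, whose cardinality is at most $(3/\delta)^k$. Since rotation invariance ensures that each point of $S_V$ is distributed uniformly on $\SN$, combining the concentration bound with a union bound over $\mathcal{N}$ shows that $\bigl|\norm{x}-M\bigr|<(\varepsilon/2)M$ holds for all $x\in\mathcal{N}$ with positive probability as long as
\[
k \;\leq\; \frac{c'\,\varepsilon^{2}n\,(M/b)^{2}}{\log(b/\varepsilon M)}.
\]
A standard successive-approximation argument then transfers the estimate from the net to the entire sphere $S_V$, producing an $\varepsilon$-Euclidean section with $r\sim 1/M$.

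The main obstacle is that the estimate above is vacuous when $M/b$ is small, and a priori $M/b$ can be of order $1/\sqrt{n}$ (for example when $K=B_\infty^n$). The remedy --- and the technical heart of the argument --- is the Dvoretzky--Rogers lemma: after a suitable linear change of coordinates, which preserves both the hypothesis and the conclusion of the theorem, one may assume that $K$ is in a position in which $M/b$ is at least of order $\sqrt{\log n/n}$. Inserting this normalization into the probabilistic bound yields
\[
k(\varepsilon,n) \;\geq\; c(\varepsilon)\,\log n,
\]
which in particular tends to infinity with $n$, as required. The rest of the construction is then a clean combination of concentration of measure with a net argument on a randomly chosen subspace.
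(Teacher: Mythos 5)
Your proposal follows exactly Milman's approach, which is the one used in the paper: Levy's concentration on the sphere, a net-and-union-bound argument over a random rotation/subspace, and the Dvoretzky--Rogers lemma to lower-bound $M/b$ after putting $K$ in the maximal-ellipsoid position. The outline of ideas is sound.

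However, there is one genuine slip that, taken at face value, defeats the final estimate. You pick a $\delta$-net with $\delta\sim\varepsilon/b$, which has cardinality at most $(3b/\varepsilon)^k$, but the displayed condition you write has $\log(b/\varepsilon M)$ in the denominator rather than the $\log(b/\varepsilon)$ that this net size actually produces. That extra $M$ is fatal: after normalizing $b=1$ and inserting the Dvoretzky--Rogers bound $M\gtrsim\sqrt{\log n/n}$, the denominator $\log(1/\varepsilon M)$ is itself of order $\log n$, so your displayed inequality gives only $k\lesssim\varepsilon^2$, a constant, not $c(\varepsilon)\log n$. The source of the confusion is likely a direct Lipschitz transfer from the net to the sphere, which would indeed force $\delta\sim\varepsilon M/b$ and hence a $\log(b/\varepsilon M)$ denominator; this naive transfer is too lossy. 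The paper avoids it precisely by the successive-approximation argument (which you name but do not actually use in the computation): that argument lets you keep $\delta\sim\varepsilon$, net cardinality $(3/\varepsilon)^k$ (after $b=1$), and denominator $\log(3/\varepsilon)$, which is independent of $n$, yielding $k\ge c\frac{\varepsilon^2}{\log(1/\varepsilon)}\log n$ as required. Correct the denominator to $\log(b/\varepsilon)$ (equivalently, justify it by the successive-approximation step) and the argument goes through.
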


The theorem was proved by Aryeh Dvoretzky \cite{dv},
answering a question of Grothendieck. The question of Grothendieck was asked in
\cite{gr} in relation with a paper of Dvoretzky and Rogers \cite{dr}. \cite{gr} gives another proof of the main application (the
existence, in any infinite dimensional Banach space, of an
unconditionally convergent series which is not absolutely convergent)
of the result of Dvoretzky and Rogers \cite{dr} a version of which is used bellow (Lemma \ref{lem:dvor_rog}).

The original proof of Dvoretzky is very involved. Several
simplified proofs were given in the beginning of the
70-s; one by Figiel \cite{fi}, one by Szankowski \cite{sz} and the earliest one, a
version of which we'll present here, by Milman \cite{mi}. This proof which turn out to be very influential is
based on the notion of {\bf Concentration of Measure}. Milman was
also the first to get the right estimate ($\log n$) of the
dimension  $k=k(\varepsilon,n)$ of the almost euclidean section as the function of the dimension $n$. The
dependence of $k$ on $\varepsilon$ is still wide open and we'll
discuss it in detail later in this survey. Milman's version of Dvoretzky's theorem is the following.

\begin{thm}\label{thm_dvoretzky_1} 
For every $\varepsilon>0$ there exists a constant $c=c(\varepsilon)>0$
such that for every $n\in \NN$ and every convex symmetric body in
$K\subset \RN$ there exists a subspace $V\subseteq \RN$
satisfying:
\begin{enumerate}
\item $\dim V = k$, where $k\ge c\cdot \log n$.
\item $V\cap K$ is $\varepsilon$-euclidean:
\[
r\cdot V\cap B_2^n \subset V\cap K \subset (1+\varepsilon)r\cdot
V\cap B_2^n.
\]
\end{enumerate}
\end{thm}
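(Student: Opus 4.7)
The plan is to follow Milman's concentration of measure approach. The first step is to translate the statement into one about the Minkowski functional $\|\cdot\|_K$ of $K$: an $\e$-euclidean section of dimension $k$ corresponds to a $k$-dimensional subspace on which $\|\cdot\|_K$ is pinched between $M$ and $(1+\e)M$ for some $M>0$, after rescaling. Set $b = \max_{x \in \SN}\|x\|_K$, which is the Lipschitz constant of $\|\cdot\|_K$ with respect to the Euclidean norm, and let $M = \int_{\SN} \|x\|_K\, d\sigma(x)$ be its spherical mean.

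The second step is concentration combined with a net argument. By L\'evy's isoperimetric inequality on $\SN$, for every $\eta > 0$,
\[
\sigma\{x \in \SN : \modulus{\|x\|_K - M} > \eta\} \le C\exp(-c n \eta^2/b^2).
\]
Fix a $\delta$-net $\mathcal{N}$ of $\SK$ of cardinality at most $(3/\delta)^k$, embedded as a subset of $\SN$ via the inclusion $\RK \subset \RN$, and draw a Haar-random rotation $U \in O(n)$. With an appropriate choice of $\delta$ and $\eta$ of order $\e M$ (up to factors of $b$), a union bound shows that the probability that some $Ux$ with $x \in \mathcal{N}$ has $\modulus{\|Ux\|_K - M} > \eta$ is less than $1/2$ as soon as
\[
k \le c'(\e)\cdot n(M/b)^2.
\]
For any such $U$, the Lipschitz property propagates the pointwise bound from the net to the whole sphere of $V = U(\RK)$, delivering the required two-sided bound on $\|\cdot\|_K$.

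The third step, which is where the $\log n$ growth emerges, is to arrange $K$ in a good linear position. The Dvoretzky--Rogers lemma (Lemma \ref{lem:dvor_rog}) guarantees that after a suitable linear transformation we may assume $K \subset \BN$ (so $b \le 1$) while a positive proportion of some orthonormal frame $e_1, \dots, e_{\halfn}$ satisfies $\|e_i\|_K \ge 1/2$. A standard calculation, passing through Gaussian averages, then yields $M \ge c\sqrt{\log n / n}$. Substituting $(M/b)^2 \ge c\log n / n$ into the inequality from step two produces $k \ge c(\e)\log n$.

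The main obstacle is step three: securing the lower bound $M/b \ge c\sqrt{\log n / n}$. Concentration and the net argument are general and robust, but extracting the logarithmic dependence on $n$ requires genuinely convex-geometric input through the Dvoretzky--Rogers lemma; without it one would only recover a dimension bound $k \ge c(\e)$ independent of $n$. A secondary issue is bookkeeping: tracking the dependence of $c(\e)$ on $\e$ through the balancing of the net cardinality $(C/\e)^k$ against the Gaussian tail $\exp(-c n\e^2 M^2/b^2)$, which eventually contributes the $\e$-factors to $c(\e)$.
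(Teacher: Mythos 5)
Your proposal follows essentially the same three-step strategy as the paper: L\'evy concentration plus a random rotation and net argument gives Milman's bound $k \ge c(\e)\,n(M/b)^2$, then putting $K$ in John position and invoking the Dvoretzky--Rogers lemma yields $M/b \ge c\sqrt{\log n / n}$ via the standard Gaussian computation, so $k \ge c(\e)\log n$. One small slip: to get $b\le 1$ you need $B_2^n \subset K$ (the maximal volume \emph{inscribed} ellipsoid), not $K \subset B_2^n$ as you wrote; your subsequent use of $\|e_i\|_K \ge 1/2$ from Dvoretzky--Rogers shows you intended the former.
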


For example, the unit ball of $\ell_\infty^n$ - the $n$-dimensional
cube - is far from the Euclidean ball. Its easy to see, that the
ratio of radii of the bounding and the bounded ball is $\sqrt{n}$:
\[
B_2^n \subset B_\infty^n \subset \sqrt{n} B_2^n
\]
and $\sqrt{n}$ is the best constant. Yet, according to Theorem \ref{thm_dvoretzky_1}, we can find a subspace of $\RN$ of dimension proportional
to $\log n$ in which the ratio of bounding and bounded balls will
be $1+\varepsilon$.

There is a simple correspondence between symmetric convex sets in
$\RN$ and norms on $\RN$ Given by $\pnorm{x}{K}=\inf \{
\lambda>0\;:\; \frac{x}{\lambda} \in K\}$ The following is an
 equivalent formulation of
Theorem \ref{thm_dvoretzky_1} in terms of norms.

\begin{thm}\label{thm_dvoretzky_2}
For every $\varepsilon>0$ there exist a constant $c=c(\varepsilon)>0$
such that for every $n\in \NN$ and every norm $\pnorm{\cdot}{}$ in
$\RN$ \ $\ell_2^k$ $(1+\varepsilon)$-embeds in
$(\RN,\pnorm{\cdot}{})$ for some $k\ge c\cdot \log n$.
\end{thm}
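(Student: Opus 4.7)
The plan is to translate Theorem \ref{thm_dvoretzky_1}, which is stated in the language of convex bodies, directly into the normed-space formulation of Theorem \ref{thm_dvoretzky_2} by exploiting the Minkowski-gauge correspondence $\pnorm{x}{K}=\inf\{\lambda>0:\,x/\lambda\in K\}$.

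First, given an arbitrary norm $\pnorm{\cdot}{}$ on $\RN$, let $K=\{x\in\RN:\pnorm{x}{}\le 1\}$ be its unit ball. Then $K$ is a compact, convex, symmetric body with non-empty interior, so Theorem \ref{thm_dvoretzky_1} applies and furnishes a subspace $V\subseteq\RN$ with $\dim V=k\ge c(\varepsilon)\log n$ and some $r>0$ satisfying $rV\cap B_2^n\subseteq V\cap K\subseteq (1+\varepsilon)rV\cap B_2^n$.

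Second, I would unwind these inclusions into norm inequalities on $V$. The left inclusion says that every $v\in V$ with $\pnorm{v}{2}\le r$ lies in $K$, i.e.\ satisfies $\pnorm{v}{}\le 1$; by homogeneity this gives $\pnorm{v}{}\le \pnorm{v}{2}/r$ for all $v\in V$. The right inclusion says that every $v\in V\cap K$ satisfies $\pnorm{v}{2}\le (1+\varepsilon)r$; by homogeneity this gives $\pnorm{v}{2}/((1+\varepsilon)r)\le \pnorm{v}{}$ for all $v\in V$. Combining,
\[
\frac{1}{(1+\varepsilon)r}\pnorm{v}{2}\le \pnorm{v}{}\le \frac{1}{r}\pnorm{v}{2} \qquad \text{for every } v\in V.
\]

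Third, fix any Euclidean isometry $U:\RR^k\to V$ (since $V$ is a $k$-dimensional Euclidean subspace of $\RN$, such a $U$ exists and satisfies $\pnorm{Ux}{2}=\pnorm{x}{2}$), and define $T=rU:\ell_2^k\to(\RN,\pnorm{\cdot}{})$. The previous display, applied to $v=Ux$, then yields $\pnorm{x}{2}/(1+\varepsilon)\le \pnorm{Tx}{}\le \pnorm{x}{2}$, which exhibits $T$ as a $(1+\varepsilon)$-isomorphic embedding of $\ell_2^k$ into $(\RN,\pnorm{\cdot}{})$, with $k\ge c(\varepsilon)\log n$ as required.

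There is really no analytic obstacle here; the whole content of the proof is bookkeeping of the Minkowski-functional dictionary and the careful tracking of which inclusion becomes which inequality after homogenization. The only point that deserves a moment of care is verifying that the dictionary is applied in the correct direction on each side so that the resulting distortion of the embedding is exactly $1+\varepsilon$ rather than some larger constant.
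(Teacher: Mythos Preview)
Your derivation of Theorem \ref{thm_dvoretzky_2} from Theorem \ref{thm_dvoretzky_1} via the gauge correspondence is correct; indeed the paper itself says in one line that ``Clearly, Theorem \ref{thm_dvoretzky_1} implies Theorem \ref{thm_dvoretzky_2}.'' So there is no mathematical error in what you wrote.

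However, this is not the paper's proof, and in the paper's logical order your argument is circular. The paper does \emph{not} establish Theorem \ref{thm_dvoretzky_1} independently and then read off Theorem \ref{thm_dvoretzky_2}; it does the reverse. The announced plan in Lecture~2 is ``We are now ready to prove Theorem \ref{thm_dvoretzky_2} and consequently also Theorem \ref{thm_dvoretzky_1}.'' The actual proof of Theorem \ref{thm_dvoretzky_2} consists of (i) putting $K$ in the position where $B_2^n$ is the maximal-volume inscribed ellipsoid, (ii) invoking Milman's Theorem \ref{thm_pre_dvoretzky}, which gives an almost-Euclidean subspace of dimension $\ge c(\varepsilon)(E/b)^2 n$, and (iii) using the Dvoretzky--Rogers Lemma \ref{lem:dvor_rog} together with the Gaussian computation (\ref{eq_integral}) to show $E\ge c\sqrt{(\log n)/n}$, so that $(E/b)^2 n\ge c\log n$. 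Only after this is Theorem \ref{thm_dvoretzky_1} deduced.

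So while your bookkeeping is fine, it bypasses all of the analytic content the paper intends here (concentration of measure, the maximal-ellipsoid position, and the $\sqrt{\log n}$ estimate on the expected maximum of Gaussians). If you want a self-contained proof in the spirit of the paper, you should reproduce that argument rather than appeal to Theorem \ref{thm_dvoretzky_1}, which at this point in the paper has not yet been proved.
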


By ``$X$ $C$-embed in $Y$" I mean: There exists a one to one
bounded operator $T:X\to Y$ with $\|T\|\|(T_{|TX})^{-1}\|\le C$.

Clearly, Theorem \ref{thm_dvoretzky_1} implies Theorem
\ref{thm_dvoretzky_2}. Also, Theorem \ref{thm_dvoretzky_2} clearly
implies a weaker version of Theorem \ref{thm_dvoretzky_1}, with
$B_2^n$ replaced by some ellipsoid (which by definition is an
invertible linear image of $B_2^n$). But, since any
$k$-dimensional ellipsoid easily seen to have a $k/2$-dimensional
section which is a multiple of the Euclidean ball, we see that
also Theorem \ref{thm_dvoretzky_2} implies Theorem
\ref{thm_dvoretzky_1}. This argument also shows that proving
Theorem \ref{thm_dvoretzky_1} for $K$ is equivalent to proving it
for some invertible linear image of $K$.

Before starting the actual proof of Theorem \ref{thm_dvoretzky_2} here is \textbf{A Very vague sketch of the proof:} Consider the
unit sphere of $\ell_2^n$, the surface of $B_2^n$, which we will
denote by $\SN=\{x\in \RN\;:\; \pnorm{x}{2}=1\}$. Let
$\pnorm{x}{}$ be some arbitrary norm in $\RN$. The first task will
be to show that there exists a ``large" set
$S_{\text{good}}\subset \SN$ satisfying $\forall x\in
S_{\text{good}}.\; \modulus{\pnorm{x}{} - M} < \varepsilon M$ where
$M$ is the average of $\pnorm{x}{}$ on $\SN$. Moreover, we shall
see that, dependeing on the Lipschitz constant of $\|\cdot\|$, the
set $S_{\text{good}}$ is ``almost all" the sphere in the measure
sense. This phenomenon is called \textsl{concentration of
measure}.

\noindent The next stage will be to pass from the ``large" set to
a large dimensional subspace of $\RN$ contained in it. Denote
$O(n)$ - the group of orthogonal transformations from $\RN$ into
itself. Choose some subspace $V_0$ of appropriate dimension $k$
and fix an $\varepsilon$-net $N$ on $V_0\cap \SN$. For some $x_0\in
N$,``almost all" transformations $U\in O(n)$ will send it into
some point in $S_{\text{good}}$. Moreover, if the ``almost all"
notion is good enough, we will be able to find a transformation
that sends all the points of the $\varepsilon$-net into
$S_{\text{good}}$. Now there is a standard approximation procedure
that will let us pass from the $\varepsilon$-net to all points in the
subspace.

In preparation for the actual proof denote by $\mu$ the normalized Haar measure on $\SN$ - the unique,
probability measure which is invariant under the group of
orthogonal transformations. The main tool will be the following
concentration of measure theorem of Paul Levy (for a proof see e.g. \cite{sc1}).

\begin{thm}\label{thm_concentration}(P. Levy)
Let $f:\SN \longrightarrow \RR$ be a Lipshitz function with a
constant $L$; i.e.,
\[
\forall x,y\in \SN\; \modulus{f(x)-f(y)}\le L\pnorm{x-y}{2}.
\]
Then,
\[
\mu\{x\in\SN\;:\; \modulus{f(x)-Ef} > \varepsilon\} \le
2e^{-\frac{\varepsilon^2n}{2L^2}}.
\]
\end{thm}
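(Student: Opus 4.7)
The plan is to deduce the concentration estimate from the spherical isoperimetric inequality of Levy--Schmidt, first establishing concentration of $f$ around a median $M_f$ and then translating the result to concentration around the mean $Ef$. The isoperimetric inequality asserts that among all measurable $A \subseteq \SN$ of prescribed $\mu(A)$, spherical caps minimize the measure of the $\delta$-enlargement $A_\delta = \{x \in \SN : d(x,A) \leq \delta\}$, where $d$ is the geodesic distance on the sphere (or, up to a harmless absolute constant, the Euclidean one inherited from $\RN$). Combined with an explicit calculation of $\mu(C_\delta)$ for a cap $C$ of measure $\tfrac{1}{2}$, this gives the quantitative statement that whenever $\mu(A) \geq \tfrac{1}{2}$,
\[
\mu(A_\delta) \geq 1 - e^{-\delta^2 n/2}.
\]

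Concentration around the median then follows from the Lipschitz hypothesis. Let $M_f$ be a median of $f$ and set $A^- = \{f \leq M_f\}$ and $A^+ = \{f \geq M_f\}$; each has $\mu$-measure at least $\tfrac{1}{2}$. Any point in $A^-_{\varepsilon/L}$ lies within Euclidean distance $\varepsilon/L$ of a point where $f \leq M_f$, and the Lipschitz bound forces $f \leq M_f + \varepsilon$ there; symmetrically for $A^+_{\varepsilon/L}$. Hence
\[
A^-_{\varepsilon/L} \cap A^+_{\varepsilon/L} \subseteq \{x \in \SN : \modulus{f(x) - M_f} \leq \varepsilon\},
\]
and the isoperimetric bound applied to each of $A^{\pm}$ with $\delta = \varepsilon/L$ yields $\mu\{\modulus{f - M_f} > \varepsilon\} \leq 2 e^{-\varepsilon^2 n/(2L^2)}$.

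To finish, one trades the median for the mean. The previous tail bound gives $\modulus{Ef - M_f} \leq \int_0^\infty \mu\{\modulus{f - M_f} > t\}\,dt = O(L/\sqrt{n})$, which is much smaller than the scale $\varepsilon$ at which the estimate is nontrivial and is used in applications such as Theorem \ref{thm_dvoretzky_1}; a short routine argument then absorbs this discrepancy and reproduces the stated inequality with $Ef$ in place of $M_f$, perhaps with a marginally weaker constant. The main obstacle is the spherical isoperimetric inequality itself, whose classical proof proceeds by a two-point symmetrization on the sphere and is rather delicate; it is reasonable here to invoke it as a black box (as the author does by referring to \cite{sc1}) and let the steps above stand as a self-contained reduction from isoperimetry to Gaussian-type concentration.
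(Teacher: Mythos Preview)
The paper does not include a proof of Theorem \ref{thm_concentration}; it simply refers the reader to \cite{sc1}. Your outline is exactly the classical route (Levy--Schmidt isoperimetry $\Rightarrow$ concentration around a median $\Rightarrow$ concentration around the mean) and is correct. The only caveat, which you already flag, is that the passage from $M_f$ to $Ef$ costs an additive $O(L/\sqrt{n})$ in the deviation parameter, so what one literally obtains has slightly worse numerical constants than the clean $2e^{-\varepsilon^2 n/(2L^2)}$ stated; for every application made of the theorem in these notes this is immaterial.
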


\noindent \textbf{Remark:} The theorem also holds with the
expectation of $f$ replaced by its median.

\medskip

Our next goal is to prove the following theorem of Milman which,
gives some lower bound on the dimension of almost Euclidean section
in each convex body. It will be the main tool in the proof of Theorem \ref{thm_dvoretzky_2}.\\

\begin{thm}\label{thm_pre_dvoretzky}(V. Milman)
For every $\varepsilon>0$ there exists a constant $c=c(\varepsilon)>0$
such that for every $n\in \NN$ and every norm $\pnorm{\cdot}{}$ in
$\RN$ there exists a subspace $V\subseteq \RN$ satisfying:
\begin{enumerate}
\item $\dim V = k$, where $k\ge c\cdot \bigg( \frac{E}{b}
\bigg)^2n$. \item For every $x\in V$:
\[
(1-\varepsilon)E\cdot \pnorm{x}{2}\le \pnorm{x}{} \le
(1+\varepsilon)E\cdot \pnorm{x}{2}.
\]
\end{enumerate}
Here $E=\int_{\SN} \pnorm{x}{} d\mu$ and $b$ is the smallest
constant satisfying $\pnorm{x}{} \le b\pnorm{x}{2}$.
\end{thm}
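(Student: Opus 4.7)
I would follow exactly the three-step recipe sketched informally in the excerpt: concentration, net on a subspace, successive approximation. The key inputs are Levy's theorem (Theorem \ref{thm_concentration}) and a volumetric estimate for the size of a net on the Euclidean sphere of a subspace.

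\emph{Step 1: concentration for the norm.} The function $f(x)=\|x\|$ is Lipschitz on $\SN$ with constant exactly $b$, since $\bigl| \|x\|-\|y\| \bigr| \le \|x-y\| \le b\|x-y\|_2$. Applying Theorem \ref{thm_concentration} with $L=b$ and $\eta>0$ (to be chosen comparable to $\varepsilon$) gives
\[
\mu\bigl\{x\in\SN:\ \bigl|\|x\|-E\bigr|>\eta E\bigr\}\ \le\ 2\exp\!\left(-\frac{\eta^2 E^2\,n}{2b^2}\right),
\]
so the ``good set'' $S_{\text{good}}=\{x:|\|x\|-E|\le\eta E\}$ has measure exponentially close to $1$ on the scale $(E/b)^2 n$.

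\emph{Step 2: a random rotation sends a net into $S_{\text{good}}$.} Fix any subspace $V_0\subset\RN$ of dimension $k$ (to be chosen). By a standard volume argument there is a $\delta$-net $\mathcal{N}\subset V_0\cap\SN$ in the $\ell_2$ metric with $|\mathcal{N}|\le(3/\delta)^k$. For a Haar-random $U\in O(n)$, the point $Ux$ is uniformly distributed on $\SN$ for every fixed $x$, so a union bound over $\mathcal{N}$ gives
\[
\PP\bigl\{\exists x\in\mathcal{N}:\ Ux\notin S_{\text{good}}\bigr\}\ \le\ (3/\delta)^k\cdot 2\exp\!\left(-\frac{\eta^2 E^2 n}{2b^2}\right).
\]
Choosing $k\le c_0(\eta,\delta)\,(E/b)^2 n$ with $c_0(\eta,\delta)$ a suitable constant (proportional to $\eta^2/\log(3/\delta)$) makes this probability $<1$, so some $U$ exists for which the rotated net $U\mathcal{N}$ lies entirely in $S_{\text{good}}$.

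\emph{Step 3: successive approximation from $U\mathcal{N}$ to all of $V:=UV_0$.} Given any $y\in V\cap\SN$, recursively pick $x_i\in\mathcal{N}$ approximating the normalization of the current remainder, producing a telescoping expansion $y=\sum_{i\ge 0}\delta_0\delta_1\cdots\delta_{i-1}Ux_i$ with each $\delta_j\le\delta$. Taking norms and summing the geometric series yields
\[
\|y\|\le\frac{(1+\eta)E}{1-\delta},\qquad \|y\|\ge(1-\eta)E-\frac{\delta(1+\eta)E}{1-\delta}.
\]
Choosing $\eta$ and $\delta$ as small fixed multiples of $\varepsilon$ (say $\eta=\delta=\varepsilon/6$) makes both bounds fall inside $[(1-\varepsilon)E,(1+\varepsilon)E]$, and by homogeneity the same holds for every $y\in V$. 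With these choices the constraint in Step 2 reads $k\le c(\varepsilon)\,(E/b)^2 n$, which is the desired conclusion.

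\emph{Main obstacle.} The only delicate point is the calibration between the net error $\delta$ and the concentration error $\eta$ in Step 3. One must verify that the successive approximation simultaneously yields a valid upper bound (which uses only the geometric series) and a valid lower bound (which uses the difference $\|Ux_0\|-\|y-Ux_0\|$ together with the upper bound already obtained for the tail). Because both errors can be taken of order $\varepsilon$ independent of $b/E$, the dependence on $b/E$ appears only through the concentration exponent in Step 1, and this is precisely what produces the correct factor $(E/b)^2$ in the dimension estimate.
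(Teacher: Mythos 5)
Your proposal is correct and follows exactly the route the paper sketches: L\'evy concentration for the Lipschitz function $\|\cdot\|$ (with $L=b$), a $\delta$-net on $V_0\cap S^{n-1}$ rotated into the good set by a union bound over a Haar-random $U\in O(n)$, and the standard successive-approximation step to pass from the rotated net to all of $V=UV_0$. The only thing you add is spelling out the geometric-series calculation that the paper defers to \cite{ms}, and your calibration $\eta=\delta\asymp\varepsilon$ reproduces the paper's $c(\varepsilon)\sim \varepsilon^2/\log(3/\varepsilon)$ in the dimension bound $k\ge c(\varepsilon)(E/b)^2n$.
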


The definition of $b$ implies that the function $\|\cdot\|$ is
Lipschitz with constant $b$ on $S^{n-1}$. Applying Theorem
\ref{thm_concentration} we get a subset of $S^{n-1}$ of probability
very close to one ($\ge 1-2e^{-\varepsilon^2E^2n/2}$), assuming $E$
is not too small, on which
\begin{equation}\label{eq_equivalence}
(1-\varepsilon)E \le \pnorm{x}{} \le (1+\varepsilon)E .
\end{equation}
We need to replace this set of large measure with a set which is
large in the algebraic sense: A set of the form $V\cap S^{n-1}$
for a subspace $V$ of relatively high dimension. The way to
overcome this difficulty is to fix an $\varepsilon$-net in $V_0\cap
S^{n-1}$ (i.e., a finite set such that any other point in $V_0\cap
S^{n-1}$ is of distance at most $\varepsilon$ from one of the points
in this set) for some {\em fixed} subspace $V_0$ (of dimension $k$
to be decided upon later) and show that we can find an orthogonal
transformation $U$ such that $\|Ux\|$ satisfies equation
\ref{eq_equivalence} for each $x$ in the $\varepsilon$-net. A
successive approximation argument (the details of which can be
found, e.g., in \cite{ms}, as all other details which are not
explained here), then gives a similar inequality (maybe with
$2\varepsilon$ replacing $\varepsilon$) for all $x\in V_0\cap S^{n-1}$,
showing that $V=UV_0$ can serve as the needed subspace.

To find the required $U\in O(n)$ we need two simple facts. The
first is to notice that if we denote by $\nu$ the normalized Haar
measure on the orthogonal group $O(n)$, then, using the uniqueness
of the Haar measure on $S^{n-1}$, we get that, for each fixed
$x\in S^{n-1}$, the distribution of $Ux$, where $U$ is distributed
according to $\nu$, is $\mu$. It follows that, for each fixed
$x\in S^{n-1}$, with $\nu$-probability at least
$1-2e^{-\varepsilon^2E^2n/2}$,
\[
(1-\varepsilon)E\le \pnorm{Ux}{} \le (1+\varepsilon)E .
\]
Using a simple union bound we get that for any finite set $N\subset
S^{n-1}$, with $\nu$-probability $\ge
1-2|N|e^{-\varepsilon^2E^2n/2}$, $U$ satisfies
\[
(1-\varepsilon)E\le \pnorm{Ux}{} \le (1+\varepsilon)E
\]
for all $x\in N$ ($|N|$ denotes the cardinality of $N$).

\begin{lem}\label{lem_first}
For every $0 < \varepsilon < 1$ there exists an $\varepsilon$-net $N$ on
$S^{k-1}$ of cardinality $\le \bigg( \frac{3}{\varepsilon} \bigg)^k$.
\end{lem}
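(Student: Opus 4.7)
The plan is to use the standard volume-packing argument. First I would take $N \subset S^{k-1}$ to be a maximal $\varepsilon$-separated set, meaning a set of points any two of which satisfy $\pnorm{x-y}{2} > \varepsilon$, chosen maximal with respect to inclusion. Maximality immediately gives that $N$ is an $\varepsilon$-net: if some $z \in S^{k-1}$ were at distance $> \varepsilon$ from every point of $N$, we could add it to $N$, contradicting maximality.

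Next I would bound $|N|$ by a volume comparison in $\RK$. The open balls $x + (\varepsilon/2) B_2^k$ for $x \in N$ are pairwise disjoint (since the centers are more than $\varepsilon$ apart). Moreover, each such ball lies inside $(1+\varepsilon/2)B_2^k$, because $\pnorm{x}{2}=1$ and points in the ball are within $\varepsilon/2$ of $x$. Comparing Lebesgue volumes,
\[
|N| \cdot \left(\frac{\varepsilon}{2}\right)^k \mathrm{vol}(B_2^k) \le \left(1 + \frac{\varepsilon}{2}\right)^k \mathrm{vol}(B_2^k),
\]
so that
\[
|N| \le \left(\frac{2 + \varepsilon}{\varepsilon}\right)^k \le \left(\frac{3}{\varepsilon}\right)^k,
\]
using $\varepsilon < 1$ in the last step.

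There is no real obstacle here; the only thing to be a bit careful about is the scale $\varepsilon/2$ rather than $\varepsilon$, which is what gives the clean bound $3/\varepsilon$ instead of a weaker constant. The same argument works in any normed space, but on $S^{k-1}$ the ambient Euclidean volume suffices because the relevant Euclidean balls around points of $S^{k-1}$ are contained in a slightly inflated Euclidean ball.
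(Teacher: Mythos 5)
Your proof is correct and follows essentially the same volume-packing argument as the paper: take a maximal $\varepsilon$-separated set on $S^{k-1}$, observe that maximality makes it an $\varepsilon$-net, and bound its cardinality by comparing the total volume of disjoint $\varepsilon/2$-balls around the net points to the volume of $(1+\varepsilon/2)B_2^k$. The only cosmetic difference is that you make explicit the final step $\left(\tfrac{2+\varepsilon}{\varepsilon}\right)^k \le \left(\tfrac{3}{\varepsilon}\right)^k$ using $\varepsilon<1$, which the paper leaves implicit.
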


So as long as, $2\bigg( \frac{3}{\varepsilon} \bigg)^k
e^{-\varepsilon^2E^2n/2}<1$ we can find the required $U$. This
translates into: $k\ge c\frac{\varepsilon^2}{\log \frac
{3}{\varepsilon}}E^2n$ for some absolute $c>0$ as is needed in the
conclusion of Theorem \ref{thm_pre_dvoretzky}.

\medskip

\noindent \textbf{Remark:} This proof gives that the $c(\varepsilon)$ in Theorem  \ref{thm_pre_dvoretzky} can be taken to be $c\frac{\varepsilon^2}{\log \frac
{3}{\varepsilon}}$ for some absolute $c>0$. This can be improved to $c(\varepsilon)\ge c\varepsilon^2$ as was done first by Gordon in \cite{go}. (See also \cite{sc0}) for a proof that is more along the lines here.) This later estimate can't be improved as we shall see below in Claim \ref{claim:figiel}.

\medskip

To prove the lemma, let $N=\{x_i\}_{i=1}^m$ be a maximal set in
$S^{k-1}$ such that for all $x,y\in N$ $\pnorm{x-y}{2} \ge
\varepsilon$. The maximality of $N$ implies that it is an
$\varepsilon$-net for  $S^{k-1}$. Consider
$\{B(x_i,\frac{\varepsilon}{2})\}_{i=1}^m$ - the collection of balls
of radius $\frac{\varepsilon}{2}$ around the $x_i$-s. They are
mutually disjoint and completely contained in
$B(0,1+\frac{\varepsilon}{2})$. Hence:
\[
m Vol \bigg(B(x_1,\frac{\varepsilon}{2}) \bigg)=\sum Vol
\bigg(B(x_i,\frac{\varepsilon}{2}) \bigg)=Vol \bigg(\bigcup
B(x_i,\frac{\varepsilon}{2}) \bigg)  \le Vol
\bigg(B(0,1+\frac{\varepsilon}{2}) \bigg).
\]
The $k$ homogeneity of the Lebesgue measure in $\RR^k$ implies now
that $m \le \bigg(\frac{1+\varepsilon/2}{\varepsilon/2} \bigg)^k =
\bigg( 1+ \frac{2}{\varepsilon} \bigg)^k$.

This completes the sketch of the proof of Theorem
\ref{thm_pre_dvoretzky}. \qed

\section {Lecture 2}

In order to prove Theorem
\ref{thm_dvoretzky_2} we need to estimate $E$ and $b$ for a general symmetric convex body. Since the
problem is invariant under invertible linear transformation we may
assume that $S^{n-1}$ is included in $K$, i.e., $b=1$. In remains
to estimate $E$ from below. As we'll see this can be done quite
effectively for many interesting examples (we'll show the
computation for the $\ell_p^n$ balls). However in general it may
happen that $E$ is very small even if we assume as we may that
$S^{n-1}$ touches the boundary of $K$. This is easy to see.

The way to overcome this difficulty is to assume in addition that
$S^{n-1}$ is the ellipsoid of maximal volume inscribed in $K$. An
ellipsoid is just an invertible linear image of the canonical
Euclidean ball. Given a convex body one can find by compactness an
ellipsoid of maximal volume inscribed in it. It is known that
this maximum is attained for a unique inscribed ellipsoid but this fact will not be used in the reasoning below. The invariance of the problem lets us assume that
the canonical Euclidean ball is such an ellipsoid. The advantage
of this special situation comes from the following Lemma

\begin{lem}\label{lem:dvor_rog} (Dvoretzky-Rogers)
Let $\pnorm{\cdot}{}$ be some norm on $\RN$ and denote its unit
ball by $K=B_{\pnorm{\cdot}{}}$. Assume the Euclidean ball
$B_2^n=B_{\|\cdot\|_2}$ is (the) ellipsoid of maximal volume
inscribed in $K$. Then there exist and orthonormal basis $x_1,
\dots, x_n$ such that
\[
e^{-1} (1 - \frac{i-1}{n}) \le \pnorm{x_i}{} \le 1, \;\;\; \text{
for all } 1 \le i \le n.
\]
\end{lem}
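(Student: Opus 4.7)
I plan to construct $x_1,\dots,x_n$ inductively, deducing the lower bound from the volume-maximality of $B_2^n$: whenever the desired inequality fails at stage $i$, I will exhibit an inscribed ellipsoid strictly larger in volume, contradicting maximality. The upper bound $\|x_i\| \le 1$ is immediate since $B_2^n \subseteq K$ yields $\|\cdot\| \le \|\cdot\|_2$ on $\RN$. For $i=1$ the lower bound is also immediate: a contact point $y$ with $\|y\|_2 = \|y\| = 1$ must exist, since otherwise $(1+\delta)B_2^n \subseteq K$ for some $\delta>0$, contradicting maximality; take $x_1 = y$.

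For the inductive step, assume orthonormal $x_1,\dots,x_{i-1}$ have been chosen. Let $F_i = \mathrm{span}(x_1,\dots,x_{i-1})^\perp$, let $P_i$ denote orthogonal projection onto $F_i$, and set
\[
M_i = \sup\{\|y\| : y \in F_i,\ \|y\|_2 = 1\},
\]
which is attained by compactness and continuity. Pick $x_i \in F_i \cap S^{n-1}$ with $\|x_i\| = M_i$. The task is to prove $M_i \ge e^{-1}(1-(i-1)/n)$.

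Suppose for contradiction $M_i < e^{-1}(1-(i-1)/n)$, and consider the family of ellipsoids
\[
\mathcal{E}_{s,t} = \bigg\{x \in \RN : \frac{\|(I-P_i)x\|_2^2}{(1-t)^2} + \frac{\|P_i x\|_2^2}{(1+s)^2} \le 1\bigg\},
\]
with semi-axes $1-t$ along $x_1,\dots,x_{i-1}$ and $1+s$ along every direction in $F_i$; its volume relative to $B_2^n$ is $(1-t)^{i-1}(1+s)^{n-i+1}$. For $x\in\mathcal{E}_{s,t}$, the triangle inequality together with $\|(I-P_i)x\|\le\|(I-P_i)x\|_2\le 1-t$ (using $B_2^n\subseteq K$) and $\|P_ix\|\le M_i\|P_ix\|_2\le M_i(1+s)$ (using $P_ix\in F_i$) gives $\|x\|\le(1-t)+M_i(1+s)$. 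Hence $\mathcal{E}_{s,t}\subseteq K$ whenever $M_i(1+s)\le t$.

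It remains to verify that $M_i(1+s)\le t$ and $(1-t)^{i-1}(1+s)^{n-i+1}>1$ can be satisfied simultaneously. Write $\beta = (n-i+1)/(i-1)$, and on the level curve $(1-t)^{i-1}(1+s)^{n-i+1}=1$ parametrize by $u = 1+s$, so $t = 1-u^{-\beta}$. The ratio $t/(1+s) = (1-u^{-\beta})/u$ is maximized at $u = (\beta+1)^{1/\beta}$, with maximum value $\beta/(\beta+1)^{(\beta+1)/\beta}$. Using the elementary inequality $(1+\beta)^{1/\beta}\le e$, this maximum is at least $\beta/(e(\beta+1)) = e^{-1}(n-i+1)/n = e^{-1}(1-(i-1)/n)$. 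Since $M_i$ is strictly smaller, continuity permits a small perturbation off the level curve (e.g.\ replacing $u$ by $u+\delta$ at fixed $t$) yielding $(s,t)$ with both $M_i(1+s)<t$ and $(1-t)^{i-1}(1+s)^{n-i+1}>1$; then $\mathcal{E}_{s,t}$ is an inscribed ellipsoid of strictly larger volume than $B_2^n$, contradicting maximality. The only non-routine step is this one-variable optimization; everything else is the triangle inequality and the product formula for ellipsoid volumes.
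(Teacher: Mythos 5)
Your proposal is correct and uses essentially the same argument as the paper: the same inductive choice of $x_i$ maximizing the norm over the orthogonal complement, the same two-parameter deformation of $B_2^n$ (shrink along $x_1,\dots,x_{i-1}$, expand along $F_i$), the same triangle-inequality estimate $\|x\|\le (1-t)+\|x_i\|(1+s)$, and the same volume comparison. The only cosmetic difference is that the paper reads off the constraint $a^{j-1}b^{n-j+1}\le 1$ and substitutes the optimal value $a=(j-1)/n$ directly, while you run the equivalent one-variable optimization on the unit-volume level curve inside a proof by contradiction.
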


\remark This is a weaker version of the original Dvoretzky-Rogers
lemma. It shows in particular that half of the $x_i$-s have norm
bounded from below: for all $1 \le i \le \halfn \;\; \norm{x_i}
\ge (2e)^{-1}$. This is what will be used in the proof of the main
theorem.

\begin{proof}
First of all choose an arbitrary $x_1 \in S^{n-1}$ of maximal
norm. Of course, $\pnorm{x_1}{} = 1$. Suppose we have chosen
$\{x_1, \dots, x_{i-1}\}$ that are orthonormal. Choose $x_i$ as
the one having the maximal norm among all $x\in S^{n-1}$ that are
orthogonal to $\{x_1, \dots, x_{i-1}\}$. Define a new ellipsoid
which is smaller in some directions and bigger in others:
\[
\EL=\{ \sum_{i=1}^n a_i x_i \;:\; \sum_{i=1}^{j-1}
\frac{a_i^2}{a^2} + \sum_{i=j}^n \frac{a_i^2}{b^2} \le 1 \}.
\]

Suppose, $\sum_{i=1}^n b_i x_i \in \EL$. Then $\sum_{i=1}^{j-1}
b_i x_i \in aB_2^n$, hence $\norm{\sum_{i=1}^{j-1} b_i x_i} \le
a$. Moreover, for each $x\in span \{x_j, \dots, x_n\} \bigcap
B_2^n$ we have $\pnorm{x}{} \le \pnorm{x_j}{}$  and since
$\sum_{i=j}^{n} b_i x_i \in bB_2^n$,  $\norm{\sum_{i=j}^{n} b_i
x_i} \le \norm{x_j} b$. Thus,
\[
\norm{\sum_{i=1}^n b_i x_i} \le \norm{\sum_{i=1}^{j-1} b_i x_i} +
\norm{\sum_{i=j}^n b_i x_i} \le a + \norm{x_j} \cdot b.
\]

The relation between the volumes of $\EL$ and $B_2^n$ is $Vol(\EL)
= a^{j-1}b^{n-j+1}Vol(B_2^n)$. If $a + \norm{x_j} \cdot b \le 1$,
then $\EL \subseteq K$. Using the fact that $B_2^n$ is the
ellipsoid of the maximal volume inscribed in $K$ we conclude that
\[
\forall a,b,j \text{ s.t. } a + \norm{x_j} \cdot b = 1, \;\;\;
a^{j-1}b^{n-j+1} \le 1.
\]

Substituting $b=\frac{1-a}{\norm{x_j}}$ and $a = \frac{j-1}{n}$ it
follows that for every $j \ge 2$
\[
\norm{x_j} \ge a^{\frac{j-1}{n-j+1}}(1-a) = \bigg( \frac{j-1}{n}
\bigg)^{\frac{j-1}{n-j+1}} \bigg( 1 - \frac{j-1}{n} \bigg) \ge
e^{-1} \bigg( 1 - \frac{j-1}{n} \bigg).
\]
\end{proof}

We are now ready to prove Theorem \ref{thm_dvoretzky_2} and consequently also Theorem \ref{thm_dvoretzky_1}.

As we have indicated, using Theorem \ref{thm_pre_dvoretzky}, and
assuming as we may that $B_2^n$ is the ellipsoid of maximal volume
inscribed in $K=B_{\|\cdot\|}$, it is enough to prove that
\begin{equation}\label{eq_expectation}
E = \int_{\SN} \pnorm{x} dx \ge c \sqrt{\frac{\log n}{n}},
\end{equation}
for some absolute constant $c>0$.

This will prove Theorems \ref{thm_dvoretzky_1} and \ref{thm_dvoretzky_2} with the bound $k\ge
c\frac{\varepsilon^2}{\log \frac{1}{\varepsilon}}\log n$.

We now turn to prove inequality~\ref{eq_expectation}.
According to the Dvoretzky-Rogers lemma~\ref{lem:dvor_rog} there
are orthonormal vectors $x_1, \dots, x_n$ such that for all $1
\le i \le \halfn \;\; \ \|x_i\| \ge 1/2e$.
\begin{align*}
\int_{\SN} \pnorm{x}{} d\mu(x) & = \int_{\SN} \pnorm{\sum_{i=1}^n
a_i x_i}{} d\mu(a) = \\ &= \int_{\SN} \frac{1}{2}
(\pnorm{\sum_{i=1}^{n-1} a_i x_i + a_n x_n}{} +
\pnorm{\sum_{i=1}^{n-1} a_i x_i - a_n x_n}{}) d\mu(a) \ge \\ & \ge
\int_{\SN} \max \{\pnorm{\sum_{i=1}^{n-1} a_i x_i}{}, \pnorm{a_n
x_n}{}\} d\mu(a) \ge \\ &\ge \int_{\SN} \max
\{\pnorm{\sum_{i=1}^{n-2} a_i x_i}{}, \pnorm{a_{n-1} x_{n-1}}{},
\pnorm{a_n x_n}{}\} d\mu(a) \ge \dots \ge \\ &\ge \int_{\SN}
\max_{1 \le i \le n} \pnorm{a_i x_i}{} d\mu(a) \ge \frac{1}{2e}
\int_{\SN} \max_{1 \le i \le \halfn} \modulus{a_i} d\mu(a)
\end{align*}

To Evaluate the last integral we notice that because of the
invariance of the canonical Gaussian distribution in $\RR^n$ under
orthogonal transformation and (again!) the uniqueness of the Haar
measure on $S^{n-1}$, The vector $(\sum
g_i^2)^{-1/2}(g_1,g_2,\dots,g_n)$ is distributed $\mu$. Here
$g_1,g_2,\dots,g_n$ are i.i.d. $N(0,1)$ variables. Thus

\begin{equation}\label{eq_integral}
\int_{\SN} \max_{1 \le i \le \halfn} \modulus{a_i} d\mu(a) = \EE
\frac{\max_{1 \le i \le \halfn} \modulus{g_i}}{(\sum_{i=1}^n
g_i^2)^{1/2}} = \frac{\EE \max_{1 \le i \le \halfn}
\modulus{g_i}}{\EE (\sum_{i=1}^n g_i^2)^{1/2}}
\end{equation}
(The last equation follows from the fact that the random vector $(\sum
g_i^2)^{-1/2}(g_1,g_2,\dots,g_n)$ and the random variable $(\sum
g_i^2)^{1/2}$ are independent.)

To evaluate the denominator from above note that by Jensen's
inequality:
\[
\EE (\sum_{i=1}^n g_i^2)^{1/2} \le (\EE \sum_{i=1}^n g_i^2)^{1/2}
= \sqrt{n}.
\]

The numerator is known to be of order $\sqrt{\log n}$ (estimate
the tail behavior of $\max_{1 \le i \le \halfn} \modulus{g_i}$.)

This  gives the required estimate and concludes the proof of
Theorems \ref{thm_dvoretzky_1},\ref{thm_dvoretzky_2}. \qed

As another application of Theorem \ref{thm_pre_dvoretzky} we'll
estimate the almost Euclidean sections of the $\ell_p^n$ balls
$B_p^n=\{x\in\RR^n; \|x\|_p=(\sum_{i=1}^n|x_i|^p)^{1/p}\le 1\}$.

Using the connection between the Gaussian distribution and $\mu$
we can write
\[
E_p=\int_{S^{n-1}}\|x\|_p d\mu=\EE \frac{(\sum |g_i|^p)^{1/p}}{(\sum
g_i^2)^{1/2}} = \frac{\EE (\sum |g_i|^p)^{1/p}}{\EE (\sum
g_i^2)^{1/2}}.
\]
To bound the last quantity from below we will use the following
inequality:
\[
\sqrt{2/\pi} \cdot n^{1/r} = (\sum (\EE |g_i|)^r)^{1/r}
\le \EE (\sum |g_i|^r)^{1/r} \le (\EE \sum |g_i|^r)^{1/r} = c_r \cdot
n^{1/r}
\]
Hence:
\[
E_p  \ge c_p \cdot n^{\frac{1}{p} - \frac{1}{2}}.
\]

For $p > 2$ we have $\pnorm{x}{p} \le \pnorm{x}{2}$. For $1 \le p
< 2$ we have $\pnorm{x}{p} \le n^{\frac{1}{p} - \frac{1}{2}} \cdot
\pnorm{x}{2}$. It now follows from Theorem \ref{thm_pre_dvoretzky}
that the dimension of the largest $\varepsilon$ Euclidean section of
the $\ell_p^n$ ball is

\[
k \ge \left\{%
\begin{array}{ll}
    c_p(\varepsilon) n^{\frac{2}{p}}, & 2 < p < \infty \\
    c(\varepsilon)  n, & 1 \le p < 2 .\\
\end{array}%
\right.
\]

\section { Lecture 3}
In this section we'll mostly be concerned with the question of how
good the estimates we got are. We begin with the last result of
the last section concerning the dimension of almost euclidean
sections of the $\ell_p^n$ balls.

Clearly, for $1\le p<2$ the dependence of $k$ on $n$ is best
possible. The following proposition of Bennett, Dor, Goodman,
Johnson and Newman \cite{bdgjn} shows that this is the case also for
$2<p<\infty$.

\begin{prop}
Let $2 < p < \infty$ and suppose  that $\ell_2^k$ $C$-embeds into
$\ell_p^n$, meaning that there exists a linear operator $T : \RK
\to \RN$ such that
\[
\pnorm{x}{2} \le \pnorm{Tx}{p} \le C\pnorm{x}{2},
\]
then $k \le c(p,C) n^{2/p}$.
\end{prop}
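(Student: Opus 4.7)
The plan is to pass to the transpose and study the rows of $T$. Write $T$ as the matrix whose rows (viewed as column vectors in $\mathbb{R}^k$) are $u_1,\dots,u_n$, so that
\[
\|Tx\|_p^p=\sum_{j=1}^n |\langle u_j,x\rangle|^p,\qquad \|x\|_2^p\le \sum_{j=1}^n |\langle u_j,x\rangle|^p \le C^p\|x\|_2^p.
\]
The strategy is to extract two competing estimates on $\sum_j \|u_j\|_2^p$: a lower bound forced by the left inequality, and an upper bound forced by the right inequality, and then see that they can be reconciled only if $k\le c(p,C)\,n^{2/p}$.

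For the lower bound I would integrate the left embedding inequality against the uniform probability $\mu$ on $S^{k-1}$:
\[
1=\int_{S^{k-1}} \|x\|_2^p\,d\mu(x)\;\le\; \int_{S^{k-1}} \sum_{j=1}^n |\langle u_j,x\rangle|^p\, d\mu(x) \;=\; M_p^{(k)}\sum_{j=1}^n \|u_j\|_2^p,
\]
where $M_p^{(k)}=\int_{S^{k-1}}|x_1|^p d\mu$. Using the Gaussian representation of $\mu$ already exploited in~(\ref{eq_integral}) one has $M_p^{(k)}=c_p\big/\mathbb{E}\bigl(\sum_{i=1}^k g_i^2\bigr)^{p/2}$, which is of order $k^{-p/2}$. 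Thus $\sum_j \|u_j\|_2^p\gtrsim_p k^{p/2}$.

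For the matching upper bound I would use the right embedding inequality in two ways. Plugging in the specific test vector $x=u_j/\|u_j\|_2$ and looking only at the $j$-th summand on the right gives $\|u_j\|_2^p\le C^p$, hence $\|u_j\|_2\le C$ uniformly. On the other hand the Hilbert--Schmidt norm satisfies
\[
\sum_{j=1}^n \|u_j\|_2^2 = \|T\|_{HS}^2 = \sum_{i=1}^k \|Te_i\|_2^2 \le n^{1-2/p}\sum_{i=1}^k \|Te_i\|_p^2 \le C^2 n^{1-2/p} k,
\]
using the standard comparison $\|v\|_2\le n^{1/2-1/p}\|v\|_p$ on $\mathbb{R}^n$ (for $p\ge 2$). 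Combining the two, $\sum_j\|u_j\|_2^p\le C^{p-2}\sum_j\|u_j\|_2^2\le C^p n^{1-2/p} k$.

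Putting the two bounds together yields $k^{p/2}\lesssim_p C^p n^{1-2/p} k$, i.e. $k^{(p-2)/2}\le c(p,C)\, n^{(p-2)/p}$, so that $k\le c(p,C)\, n^{2/p}$ after raising to the power $2/(p-2)$. I expect the only genuinely substantive step to be the moment estimate $M_p^{(k)}\asymp_p k^{-p/2}$; everything else is either a test-vector argument or the elementary $\ell_2$--$\ell_p$ comparison. The role of $p>2$ enters exactly at the comparison $\|v\|_2\le n^{1/2-1/p}\|v\|_p$ and at the step $\sum\|u_j\|_2^p\le C^{p-2}\sum\|u_j\|_2^2$, both of which fail for $p=2$ (consistent with the fact that the conclusion would then be empty).
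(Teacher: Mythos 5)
Your proof is correct and runs along essentially the same lines as the paper's: integrating the left-hand embedding inequality over the sphere (which, after the Gaussian change of variables you invoke, is exactly the paper's comparison of $\EE(\sum_j g_j^2)^{p/2}$ with $\sum_i\EE|g_1|^p\|u_i\|_2^p$) yields the lower bound $\sum_j\|u_j\|_2^p\ge c_p\,k^{p/2}$, and a single test vector gives $\|u_j\|_2\le C$. The one point of divergence is that your upper bound on $\sum_j\|u_j\|_2^p$ takes an unnecessary detour through the Hilbert--Schmidt norm and the $\ell_2$--$\ell_p$ comparison: once $\|u_j\|_2\le C$ is known for every $j$, the trivial bound $\sum_j\|u_j\|_2^p\le C^p n$ already closes the argument with the same power $n^{2/p}$ and the cleaner constant $c(p,C)\approx p\,C^2$, whereas your route produces $C^{2p/(p-2)}$, which degrades as $p\to 2^+$.
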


\begin{proof}
Let $T : \RK \to \RN$, $T = {(a_{ij})_{i=1}^n}_{j=1}^k$ be the
linear operator from the statement of the claim. Then for every
$x\in \RK$:
\begin{equation}\label{eq_basic}
(\sumk{j} x_j^2)^{1/2} \le (\sumn{i} \modulus{\sumk{j} a_{ij}
x_j}^p)^{1/p} \le C (\sumk{j} x_j^2)^{1/2}.
\end{equation}
In particular, for every $1 \le l \le n$, substituting instead of
$x$ the $l$-th row of $T$ we get:
\[
(\sumk{j} a_{lj}^2)^{p} \le \sumn{i} \modulus{\sumk{j} a_{ij}
a_{lj}}^p \le C^p (\sumk{j} a_{lj}^2)^{p/2}.
\]
Hence, for every $1 \le l \le n$:
\[
(\sumk{j} a_{lj}^2)^{p/2} \le C^p.
\]

Let $g_1, \dots, g_k$ be independent standard normal random
variables. Then using the fact that $\sumk{j} g_i a_j$ has the
same distribution as $(\sumk{j} a_j^2)^{1/2}g_1$ and the left hand
side of the inequality (\ref{eq_basic}) we have
\begin{align*}
\EE (\sumk{j} g_j^2)^{p/2} & \le \EE (\sumn{i} \modulus{\sumk{j}
g_j a_{ij}}^p) = \sumn{i} \EE (\modulus{g_1}^p (\sumk{j}
a_{ij}^2)^{p/2}) \le C^p \EE \modulus{g_1}^p n.
\end{align*}

On the other hand we can evaluate $\EE (\sumk{j} g_j^2)^{p/2}$
from below using the convexity of the exponent function for $p/2 >
1$:
\[
\EE (\sumk{j} g_j^2)^{p/2} \ge (\EE \sumk{j} g_j^2)^{p/2} =
k^{p/2}.
\]

Combining the last two inequalities we get an upper bound for $k$:
\[
k \le C^2 (\EE \modulus{g_1}^p)^{2/p} n^{2/p}.
\]
\end{proof}

\noindent{\bf Remarks:}
\begin{enumerate} \item There exist absolute constants $0 < \alpha \le A <
\infty$ such that $\alpha \sqrt{p} \le (\EE \modulus{g_1}^p)^{1/p}
\le A \sqrt{p}$. Hence the estimate we get for $c(p,C)$ is
$c(p,C)\le ApC^2$. In particular, for $p = \log n$, we have
\[
k \le AC^2 \log n
\]
for an absolute $A$.
 $\ell_{\log n}^n$ is $e$-isomorphic to
$\ell_\infty^n$. Hence, if we $C$-embed $\ell_2^k$ into
$\ell_\infty^n$, then $k \le Ac^2 \log n$, which means that the
$\log n$ bound in Theorem \ref{thm_dvoretzky_1} is sharp.

\item The exact dependence on
$\varepsilon$ in Theorem \ref{thm_dvoretzky_1} is an open question. From the proof we got an estimation $ k \ge \frac{c
\varepsilon^2}{\log(1/\varepsilon)} \log n$. We'll deal more with this
issue below.
\end{enumerate}

Although the last result doesn't directly give good results
concerning the dependence on $\varepsilon$ in Dvoretzky's theorem it
can be used to show that one can't expect any better beahiour on
$\varepsilon$ than $\varepsilon^2$ in Milman's theorem \ref{thm_pre_dvoretzky}. This was
observed by Tadek Figiel and didn't appear in print before. We
thank Figiel for permitting us to include it here.

\begin{claim}[Figiel]\label{claim:figiel} For any $0<\e<1$ and $n$ large enough
($n>\e^{-4}$ will do), there is a 1-symmetric norm, $\|\cdot\|$,
on $\R^n$ which is 2-equivalent to the $\ell_2$ norm and such that
if $V$ is a subspace of $\R^n$ on which the $\|\cdot\|$ and
$\|\cdot\|_2$ are $(1+\e)$-equivalent then ${\rm dim} V\le C\e^2n$
($C$ is an absolute constant).
\end{claim}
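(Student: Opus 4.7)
The plan is to apply the proposition of Bennett, Dor, Goodman, Johnson and Newman just above to the norm
\[
\pnorm{x}{} = \pnorm{x}{2} + \pnorm{x}{p},
\]
where $p$ is the (unique) solution of $n^{2/p} = \e^2 n$, i.e.\ $p = \tfrac{2\log n}{\log n - 2\log(1/\e)}$. The hypothesis $n > \e^{-4}$ gives $\log n > 4\log(1/\e)$, which forces $p \in (2,4]$. Both summands are $1$-symmetric, so the new norm is $1$-symmetric. Since $p \ge 2$ we have $\pnorm{x}{p}\le\pnorm{x}{2}$, whence $\pnorm{x}{2}\le\pnorm{x}{}\le 2\pnorm{x}{2}$, giving $2$-equivalence to $\pnorm{\cdot}{2}$.

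The key step uses the global bound $\pnorm{x}{p}\ge n^{1/p-1/2}\pnorm{x}{2}=\e\pnorm{x}{2}$, which comes from the power-mean inequality together with the choice of $p$. Suppose $V$ is $(1+\e)$-Euclidean, so $r\pnorm{x}{2}\le\pnorm{x}{}\le(1+\e)r\pnorm{x}{2}$ for all $x\in V$ and some $r>0$. Subtracting $\pnorm{x}{2}$ gives $(r-1)\pnorm{x}{2}\le\pnorm{x}{p}\le((1+\e)r-1)\pnorm{x}{2}$ on $V$, and the global lower bound on $\pnorm{x}{p}$ forces $r\ge 1+\e$. Writing $s=r-1$, so $\e\le s\le 1$, the identity $(V,\pnorm{\cdot}{2})\hookrightarrow \ell_p^n$ realises an embedding of $\ell_2^{\dim V}$ into $\ell_p^n$ with distortion at most
\[
1 + \frac{\e(1+s)}{s} \le 1 + (1+\e) \le 3.
\]
The preceding proposition, with the refined constant $c(p,C)\le ApC^2$ from the remark following it, then yields
\[
\dim V \le A\cdot p\cdot 9\cdot n^{2/p} = 9Ap\,\e^2 n \le 36A\,\e^2 n,
\]
which is the claim with $C = 36A$.

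The conceptual point, and the main obstacle to any direct application of the proposition, is that by itself it only detects constant-distortion embeddings, so a naive use would not reveal any $\e$-dependence. The trick is that adding $\pnorm{x}{p}$ to $\pnorm{x}{2}$ simultaneously keeps the total norm $2$-equivalent to $\ell_2$ (because $p\ge 2$) and supplies a built-in lower bound $\pnorm{x}{p}\ge\e\pnorm{x}{2}$ which pins the ratio $\pnorm{x}{p}/\pnorm{x}{2}$ to a bounded-ratio interval on every $(1+\e)$-Euclidean subspace. Calibrating $p$ so that $n^{2/p}=\e^2 n$ is what turns the $n^{2/p}$ appearing in the preceding proposition into the desired $C\e^2 n$ bound.
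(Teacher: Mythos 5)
Your argument follows the paper's (Figiel's) proof exactly: perturb $\|\cdot\|_2$ additively by $\|\cdot\|_p$ with $p$ calibrated so that $n^{2/p}\approx\e^2 n$, show that a $(1+\e)$-Euclidean subspace $V$ gives a bounded-distortion embedding of $\ell_2^{\dim V}$ into $\ell_p^n$, and invoke the BDGJN proposition. One step is stated imprecisely, though, and it is exactly the point where the paper's normalization differs from yours. With your calibration $n^{1/p-1/2}=\e$, the two-sided bound $r\|x\|_2\le\|x\|\le(1+\e)r\|x\|_2$ on $V$, combined with the global inequality $\|x\|\ge(1+\e)\|x\|_2$, only forces $(1+\e)r\ge 1+\e$, i.e.\ $r\ge 1$ — not $r\ge 1+\e$ as you assert. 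If $r$ were allowed to sit near $1$, the ratio $\frac{(1+\e)r-1}{r-1}$ would be uncontrolled and the distortion bound $\le 3$ would fail.

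The fix is easy but should be made explicit: one may always replace $r$ by the optimal constant $r_0=\inf_{x\in V,\ \|x\|_2=1}\|x\|$ (this preserves both sides of the two-sided bound, since any admissible $r$ satisfies $r\le r_0$), and then $r_0\ge 1+\e$ follows directly from $\|x\|\ge(1+\e)\|x\|_2$. The paper avoids even this observation by taking $n^{1/p-1/2}=2\e$ instead of $\e$: then the upper bound $\|x\|\le(1+\e)A\|x\|_2$ together with $\|x\|\ge(1+2\e)\|x\|_2$ yields $A\ge\frac{1+2\e}{1+\e}\ge 1+\frac{\e}{2}$ for every admissible $A$, optimal or not, and the distortion comes out $\le 5$ rather than $\le 3$. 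Once the optimal-$r$ observation is inserted, your computation and the conclusion $\dim V\le 36A\e^2 n$ are correct, and modulo constants this is the same proof as the paper's.
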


\begin{proof} Given $\e$ and $n>\e^{-4}$ (say) let $2<p<4$ be such
that $n^{\frac{1}{p}-\frac{1}{2}}=2\e$. Put
\[
\|x\|=\|x\|_2+\|x\|_p \] on $\R^n$. Assume that for some $A$ and
all $x\in V$,
\[
A\|x\|_2\le\|x\|\le (1+\e)A\|x\|_2.
\]
Clearly,
$1+\frac{\e}{2}\le\frac{1+n^{\frac{1}{p}-\frac{1}{2}}}{1+\e}\le
A\le 2$ and be get that for all $x\in V$,
\[
(A-1)\|x\|_2\le\|x\|_p\le ((1+\e)A-1)\|x\|_2=(A-1+\e A)\|x\|_2.
\]
Since $\e A\le  n^{\frac{1}{p}-\frac{1}{2}}\le 4(A-1)$, we get
that, for $B=A-1$,
\[
B\|x\|_2\le\|x\|_p\le 5B\|x\|_2.
\]
It follows from [BDGJN] that for some absolute $C$, \[ {\rm dim}
V\le Cn^{2/p}=C(n^{\frac{1}{p}-\frac{1}{2}})^2n=4C\e^2 n.\]
\end{proof}

Next we will see another relatively simple way of obtaining an
upper bound on $k$ in Dvoretzky's theorem, which, unlike the
estimate in Remark 1, tend to $0$ as $\varepsilon
\to 0$. It still leaves a big gap with the lower bound above.\\

\begin{claim}\label{cl_upper}
If $\ell_2^k$ $(1 + \varepsilon)$-embeds into $\ell_\infty^n$, then
\[
k \le \frac{C \log n}{\log(1/c\varepsilon)}\ \ ,
\]
for some absolute constants $0 < c, C < \infty$.
\end{claim}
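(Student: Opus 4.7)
The plan is to convert the embedding into a covering of the sphere $S^{k-1}$ by thin spherical caps and then apply a volumetric estimate. Let $T:\RK\to\RN$ realize the $(1+\varepsilon)$-embedding, and let $a_1,\dots,a_n\in\RK$ be its rows, so that $\pnorm{Tx}{\infty}=\max_i|\langle a_i,x\rangle|$. The upper bound $\pnorm{Tx}{\infty}\le(1+\varepsilon)\pnorm{x}{2}$, applied at $x=a_i/\pnorm{a_i}{2}$, gives $\pnorm{a_i}{2}\le 1+\varepsilon$ for every $i$. The lower bound $\pnorm{x}{2}\le\pnorm{Tx}{\infty}$ means that for every $x\in S^{k-1}$ some $i$ satisfies $|\langle a_i,x\rangle|\ge 1$. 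Hence the sets
\[
A_i\;=\;\{x\in S^{k-1}:|\langle a_i,x\rangle|\ge 1\}\;\subseteq\;\Bigl\{x\in S^{k-1}:\bigl|\langle a_i/\pnorm{a_i}{2},x\rangle\bigr|\ge\tfrac{1}{1+\varepsilon}\Bigr\}
\]
cover $S^{k-1}$.

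The core of the argument is a measure estimate for such a cap. By rotational invariance of $\mu$, each $\mu(A_i)$ is bounded by $\mu\bigl(\{x\in S^{k-1}:|x_1|\ge 1/(1+\varepsilon)\}\bigr)$. Using that $x_1^2$ has a $\mathrm{Beta}(1/2,(k-1)/2)$ distribution under $\mu$ (or any standard spherical cap bound), a direct computation shows
\[
\mu\bigl(\{x\in S^{k-1}:|x_1|\ge 1/(1+\varepsilon)\}\bigr)\;\le\;\frac{C}{\sqrt{k}}\bigl(1-(1+\varepsilon)^{-2}\bigr)^{(k-1)/2}\;\le\;\frac{C}{\sqrt{k}}(3\varepsilon)^{(k-1)/2},
\]
the last inequality valid for $0<\varepsilon\le 1$. (This bound is obtained by plugging $s^{-1/2}\le(1+\varepsilon)$ and $(1-s)^{(k-3)/2}\le(1-(1+\varepsilon)^{-2})^{(k-3)/2}$ into the Beta density and integrating, combined with Stirling.)

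Finally, since the $A_i$ cover the sphere,
\[
1\;=\;\mu(S^{k-1})\;\le\;\sum_{i=1}^n\mu(A_i)\;\le\;n\cdot\frac{C}{\sqrt{k}}(3\varepsilon)^{(k-1)/2}.
\]
Taking logarithms yields $\frac{k-1}{2}\log(1/(3\varepsilon))\le\log(Cn/\sqrt{k})$, so $k\le C'\log n/\log(1/(3\varepsilon))$, which is the claimed inequality with $c=3$. The main technical obstacle is the spherical cap estimate; the rest is essentially formal manipulation. One subtlety to watch is that the estimate is only informative once $\varepsilon$ is small enough that $\log(1/(c\varepsilon))>0$, but the claim becomes vacuous otherwise.
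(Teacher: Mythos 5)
Your argument is correct and rests on the same core idea as the paper's proof: the rows of $T$ produce a family of vectors that cover $S^{k-1}$ by small caps, and comparing the covering against the total mass forces $n$ to be exponentially large in $k$. The difference is in how the count is made. The paper phrases the covering metrically: it observes that the $v_i$ form a $\sqrt{2\varepsilon}$-net on $S^{k-1}$, then covers a thin Euclidean annulus $B_2^k\setminus(1-\sqrt{2\varepsilon})B_2^k$ by Euclidean balls of radius $2\sqrt{2\varepsilon}$ and compares Lebesgue volumes. You instead work directly with the normalized surface measure $\mu$ and bound $\mu(A_i)$ for each cap. The spherical-cap version is arguably a bit cleaner, since the caps genuinely cover all of $S^{k-1}$ and no detour through the annulus is needed. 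One slip: your cap estimate has the power of $k$ wrong. The normalizing constant in the Beta$(1/2,(k-1)/2)$ density is $\frac{\Gamma(k/2)}{\sqrt{\pi}\,\Gamma((k-1)/2)}\sim\sqrt{k/(2\pi)}$, so the correct bound reads
\[
\mu\bigl(\{x\in S^{k-1}:|x_1|\ge 1/(1+\varepsilon)\}\bigr)\;\le\;C\sqrt{k}\,(3\varepsilon)^{(k-1)/2},
\]
with $\sqrt{k}$ in the numerator, not the denominator. This is harmless for the conclusion, because $k\le n$ (the embedding maps into $\RN$), so the extra $\log\sqrt{k}$ is absorbed into $C'\log n$; but as written the inequality is not true as stated and should be corrected.
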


\begin{proof}
Assume we have $(1 - \varepsilon)^{-1}$-embedding of $\ell_2^k$ into
$\ell_\infty^n$, i.e., we have a  operator $T =
{(a_{ij})_{i=1}^n}_{j=1}^k$ satisfying, for every $x \in \RK$,
\begin{equation}\label{eq_basic_2}
(1 - \varepsilon) (\sumk{j} x_j^2)^{1/2} \le \max_{1 \le i \le n}
\modulus{\sumk{j} a_{ij} x_j} \le (\sumk{j} x_j^2)^{1/2}.
\end{equation}
This means that there exist vectors $v_1, \dots, v_n \in \RK$ such
that for every $x\in \RK$:
\begin{equation}\label{eq_basic_3}
(1 - \varepsilon) \pnorm{x}{2} \le \max_{1 \le i \le n} <v_i, x> \le
\pnorm{x}{2}.
\end{equation}
In particular, $\pnorm{v_i}{2} \le 1$ for every $1 \le i \le n$.\\

Suppose $x\in \SK$, then the left hand side of~\ref{eq_basic_3}
states that there exists an $1 \le i \le n$ such that $<v_i, x>
\ge (1 - \varepsilon)$, hence:
\[
\pnorm{x - v_i}{2}^2 = \pnorm{x}{2}^2 + \pnorm{v_i}{2}^2 - 2 <v_i,
x> \le 2 - 2 (1 - \varepsilon) = 2\varepsilon.
\]

Thus, the vectors $v_1, \dots, v_n$ form a $\sqrt{2\varepsilon}$-net
on the $\SK$, which means that $n$ is much larger (exponentially)
then $k$.\\
Indeed, we have
\begin{align*}
& \bigcup_{i=1}^n B(v_i, 2 \sqrt{2\varepsilon}) \supseteq \BK
\setminus (1 - \sqrt{2\varepsilon}) \BK \\ \Rightarrow \;\;\; & n Vol
B(0,2 \sqrt{2\varepsilon}) \ge Vol B(0,1) - Vol B(0,1 -
\sqrt{2\varepsilon}) \\ \Rightarrow \;\;\; & n(2 \sqrt{2\varepsilon})^k
\ge 1 - (1 - \sqrt{2\varepsilon})^k \ge \sqrt{2\varepsilon} k(1 -
\sqrt{2\varepsilon})^{k-1}.
\end{align*}

This gives for $\varepsilon < \frac{1}{32}$ and $k \ge 12$
\[
n \ge \frac{k}{2} (\frac{1}{4 \sqrt{2\varepsilon}})^{k-1} \ge
(\frac{1}{4 \sqrt{2\varepsilon}})^{k/2},
\]
or
\[
k \le \frac{4\log n}{\log \frac{1}{32 \varepsilon}}.
\]
\end{proof}

This shows that the $c(\e)$ in the statement of Theorem \ref{thm_dvoretzky_1} can't be larger than $\frac{C}{\log(1/c\varepsilon)}$.

Our last objective in this survey is to improve
somewhat the lower estimate on $c(\e)$ in the version of
Dvoretzky's theorem we proved. For that we'll need the inverse to
Claim \ref{cl_upper}.

\begin{claim}\label{cl_inverse}
$\ell_2^k$ $(1 + \varepsilon)$-embeds into $\ell_\infty^n$ for
\[
k = \frac{c \log n}{\log(1/c\varepsilon)}\ \ ,
\]
for some absolute constants $0 < c, C < \infty$.
\end{claim}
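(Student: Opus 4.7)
The idea is to reverse the correspondence uncovered in the proof of Claim \ref{cl_upper}: embeddings $\ell_2^k\hookrightarrow \ell_\infty^n$ with small distortion are essentially the same data as fine nets on $S^{k-1}$ of cardinality $n$. So to construct an embedding it suffices to produce a sufficiently dense net whose size we already know from Lemma \ref{lem_first}.

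Concretely, pick $\delta=\delta(\varepsilon)$ small enough that $1/(1-\delta^2/2)\le 1+\varepsilon$; $\delta=\sqrt{\varepsilon}$ will do for $\varepsilon$ small. By Lemma \ref{lem_first} there is a $\delta$-net $\{v_1,\dots,v_N\}\subset S^{k-1}$ of cardinality $N\le (3/\delta)^k$. Define $T:\RR^k\to\RR^N$ by $(Tx)_i=\langle v_i,x\rangle$. Since each $v_i$ lies on the sphere, $\|Tx\|_\infty\le\|x\|_2$. Conversely, for every $x\in S^{k-1}$ there is some $i$ with $\|v_i-x\|_2\le\delta$, and the identity $\langle v_i,x\rangle=1-\tfrac12\|v_i-x\|_2^2$ gives $\|Tx\|_\infty\ge 1-\delta^2/2$. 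Thus
\[
(1-\delta^2/2)\|x\|_2\le \|Tx\|_\infty\le \|x\|_2,
\]
so $T$ embeds $\ell_2^k$ into $\ell_\infty^N$ with distortion at most $1/(1-\delta^2/2)\le 1+\varepsilon$.

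Finally I solve for $k$ given $n$. The above construction works whenever $(3/\delta)^k\le n$, i.e.\ $k\le \log n/\log(3/\delta)$; if $N<n$ one simply pads the operator with zero coordinates. With $\delta=\sqrt{\varepsilon}$ this gives $k$ of order $\log n/\log(1/\varepsilon)$, which is precisely the conclusion of Claim \ref{cl_inverse} after absorbing harmless constants into the $c$'s. There is no real obstacle: the entire argument is a volumetric net construction combined with the elementary Euclidean identity $\langle v_i,x\rangle=1-\tfrac12\|v_i-x\|_2^2$; the only small care needed is choosing $\delta$ and the absolute constants so that the final bound is stated cleanly as $k=c\log n/\log(1/c\varepsilon)$.
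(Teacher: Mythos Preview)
Your proof is correct and follows exactly the approach sketched in the paper: take a fine net on $S^{k-1}$ from Lemma \ref{lem_first} and use the map $x\mapsto(\langle v_i,x\rangle)_i$ as the embedding. You have simply supplied the details the paper omits (the distortion estimate via $\langle v_i,x\rangle=1-\tfrac12\|v_i-x\|_2^2$ and the choice $\delta=\sqrt{\varepsilon}$); the paper would have an $\varepsilon$-net rather than a $\sqrt{\varepsilon}$-net, but this is an inessential calibration that only affects the absolute constants.
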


The proof is very simple and we only state the embedding. Use
Lemma \ref{lem_first} to find an $\e$-net $\{x_i\}_{i=1}^n$ on
$s^{k-1}$ where $k$ and $n$ are related as in the statement of the
claim. The embedding of $\ell_2^k$ into $\ell_\infty^n$ is given
by $x\to \{\langle x,x_i\rangle\}_{i=1}^n$.

\section {Lecture 4}
In this last section we'll prove a somewhat improved version of
Dvoretzky's theorem, replacing the $\e^2$ dependence by $\e$
(except for a $\log$ factor).

\begin{thm}\label{thm_mine} There is a constant $c>0$ such
that for all $n\in \NN$ and all $\e>0$, every $n$-dimensional
normed space $\ell_2^k$ $(1+\varepsilon)$-embeds in
$(\RN,\pnorm{\cdot}{})$ for some $k\ge \frac
{c\e}{(\log\frac{1}{\e})^2}{\log n}$.
\end{thm}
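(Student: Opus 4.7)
The plan is a two-stage bootstrap. \emph{Stage~1:} apply Theorem \ref{thm_dvoretzky_2} with the fixed constant distortion $C=2$ to obtain a subspace $W\subseteq(\R^n,\|\cdot\|)$ of dimension $m\ge c_1\log n$ on which $\|\cdot\|$ is $2$-equivalent to the Euclidean norm; after identifying $W$ linearly with $\R^m$ and rescaling we may assume $\|x\|_2\le\|x\|\le 2\|x\|_2$, so that $E=\int_{S^{m-1}}\|x\|\,d\mu(x)$ and the Lipschitz constant $b$ of $\|\cdot\|$ on $S^{m-1}$ are both of order $1$. \emph{Stage~2:} apply Proposition \ref{prop:main} to this near-Euclidean norm on $\R^m$; it produces a subspace $V\subseteq\R^m$ of dimension
\[
k\;\ge\;c_2\,\frac{\e\,m}{(\log(1/\e))^2}
\]
on which $\|\cdot\|$ is $(1+\e)$-equivalent to $\|\cdot\|_2$. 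Substituting $m\ge c_1\log n$ yields $k\ge c\e\log n/(\log(1/\e))^2$, which is Theorem \ref{thm_mine}.

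The gain from the $\e^2$-rate of Theorem \ref{thm_pre_dvoretzky} to (essentially) the $\e$-rate of Proposition \ref{prop:main} stems from two ingredients that are only available in the near-Euclidean regime of Stage~2. First, since $\|\cdot\|$ is now $2$-equivalent to $\|\cdot\|_2$, the fluctuation function $x\mapsto\|x\|-E$ on $S^{m-1}$ is not only $O(1)$-Lipschitz but also uniformly $O(1)$-bounded, which permits a sharper concentration/union-bound analysis than the crude application of Levy's inequality alone. Second, the optimal $\ell_2^k\hookrightarrow\ell_\infty^N$ embedding of Claim \ref{cl_inverse} shows that $(1+\e)$-Euclideanness of a $k$-dimensional subspace can be certified by testing $\|\cdot\|$ against only $N\sim e^{c\,k\log(1/\e)}$ directions that form an efficient $\ell_\infty^N$-like net, rather than against a generic $(3/\e)^k$-sized net (Lemma \ref{lem_first}). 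Balanced against Theorem \ref{thm_concentration}, these two improvements together save one full power of $\e$ at the price of one extra factor of $\log(1/\e)$, giving the claimed $\e/(\log(1/\e))^2$ rate.

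The main obstacle lies in implementing the successive-approximation (passage from test set to all of $V\cap S^{m-1}$) without losing the savings just described. In this near-Euclidean regime the Lipschitz constant of $\|\cdot\|$ on the Euclidean unit ball of $V$ is of order $1$, not small with $\e$, so a single-scale net argument as in the proof of Theorem \ref{thm_pre_dvoretzky} is too wasteful. The remedy is a multiscale chaining: iterate over a geometric sequence of scales $\e,\e/2,\e/4,\dots$, at each scale invoking Theorem \ref{thm_concentration} for a suitable rescaled test set, and combine via a telescoping union bound so the cumulative loss contributes only a single $\log(1/\e)$. Executing this combinatorial balancing carefully, with the norm's uniform bound and the $\ell_\infty$-net structure both in play, is the technical core of Proposition \ref{prop:main}, and once it is in hand the proof of Theorem \ref{thm_mine} reduces to the two-line argument of Stage~1 followed by Stage~2.
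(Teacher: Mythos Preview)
Your proposal rests on a misreading of Proposition~\ref{prop:main}. That proposition does not produce $(1+\e)$-Euclidean subspaces at all: its hypothesis is that a sequence $x_1,\dots,x_n$ with $\|x_i\|\ge 1/10$ satisfies $\EE\|\sum g_ix_i\|\le L\sqrt{\log n}$, and its conclusion is a subspace of dimension $\ge n^{1/4}/(CL)$ which is $CL$-isomorphic to $\ell_\infty^k$, not to $\ell_2^k$. The description you give of its content and proof (sharper concentration from boundedness, efficient $\ell_\infty$-type test sets, multiscale chaining) bears no relation to what the proposition actually says or how it is established.

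More seriously, your Stage~2 claim---that inside any norm $2$-equivalent to $\|\cdot\|_2$ on $\R^m$ one can find a $(1+\e)$-Euclidean subspace of dimension $\gtrsim \e m/(\log(1/\e))^2$---is false. Claim~\ref{claim:figiel} (Figiel) constructs, for each $\e$ and large $m$, a $1$-symmetric norm on $\R^m$ that is $2$-equivalent to $\|\cdot\|_2$ yet admits no $(1+\e)$-Euclidean subspace of dimension exceeding $C\e^2 m$. Thus the near-Euclidean regime created in Stage~1 carries no additional leverage whatsoever; the $\e^2$ barrier of Theorem~\ref{thm_pre_dvoretzky} cannot be broken by any argument that uses only $2$-equivalence to $\|\cdot\|_2$. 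Your two-stage bootstrap therefore cannot work.

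The paper's route is a dichotomy on $E=\int_{S^{n-1}}\|x\|\,d\mu$ (with $B_2^n$ the maximal-volume ellipsoid). If $\e^2E^2n\ge \e\log n/(\log(1/\e))^2$, Theorem~\ref{thm_pre_dvoretzky} already suffices. Otherwise $E\sqrt{n}\le \sqrt{\log n}/(\sqrt{\e}\log(1/\e))$, and the Dvoretzky--Rogers basis feeds Proposition~\ref{prop:main} to yield a subspace $CL$-isomorphic to $\ell_\infty^k$ with $k\ge n^{1/4}/(CL)$ and $L=1/(\sqrt{\e}\log(1/\e))$. James's iteration (Corollary~\ref{cor_james}) upgrades this to a $(1+\e)$-copy of $\ell_\infty^{k'}$ with $k'\sim k^{\e/\log L}$, and Claim~\ref{cl_inverse} then embeds $\ell_2$ into $\ell_\infty^{k'}$ with the required dimension. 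The point is that when $E$ is small the space has genuine $\ell_\infty$-content, and it is inside $\ell_\infty$---not inside a generic $2$-Euclidean space---that the $\e$-dependence of Euclidean sections improves past $\e^2$.
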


The idea of the proof is the following: We start as in the proof
of Milman's theorem \ref{thm_pre_dvoretzky}, assuming $S^{n-1}$ is
the ellipsoid of maximal volume inscribed in the unit ball of
$B_{\pnorm{\cdot}{}}$. If $E$ is large enough (so that
$\e^2E^2n\ge \frac {\e}{(\log\frac{1}{\e})^2}{\log n}$) we get the
result from Milman's theorem. If not, we'll show that the space
actually contains a relatively high dimensional $\ell_\infty^m$
and then use Claim \ref{cl_inverse} to get an estimate on the
dimension of the embedded $\ell_2^k$.

The main proposition is the following one which improves the main proposition of \cite{sc2}:

\begin{prop}\label{prop:main} Let $(X,\|\cdot\|)$ be a normed space and let $x_1,\dots,x_{n}$ be
a  sequence in $X$ satisfying $\|x_i\|\ge1/10$ for all $i$ and
\begin{equation}\label{eq:log}
\EE\Big(\|\sum_{i=1}^{n} g_i x_i\|\Big)\le L\sqrt{\log n}.
\end{equation}
Then, there is a subspace of $X$ of dimension $k\ge
\frac{n^{1/4}}{CL}$ which is $CL$-isomorphic to $\ell_\infty^k$.
$C$ is a universal constant.
\end{prop}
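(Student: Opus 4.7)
The plan is to extract, by a probabilistic argument, a subset $I\subset[n]$ of size $k\approx n^{1/4}/(CL)$ and to show that $\mathrm{span}\{x_i:i\in I\}$ is $CL$-isomorphic to $\ell_\infty^k$, using biorthogonal functionals and a diagonal-dominance condition on the associated Gram-type matrix.

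\textbf{Setup.} For each $i$ pick $x_i^*\in X^*$ with $\|x_i^*\|=1$ and $x_i^*(x_i)=\|x_i\|\ge 1/10$, and form the matrix $A_{ij}=x_i^*(x_j)$, so $A_{ii}\ge 1/10$. Since $|x_i^*(\sum_j g_j x_j)|\le\|\sum_j g_j x_j\|$ pointwise in $g$, and since $\sum_j g_j A_{ij}$ is a centered Gaussian with variance $\sum_j A_{ij}^2$, taking expectations and comparing to~(\ref{eq:log}) yields, for every $i$,
\[
\Big(\sum_j A_{ij}^2\Big)^{1/2}\le CL\sqrt{\log n}.
\]
A symmetrization applied to one coordinate gives in addition $\|x_i\|\le CL\sqrt{\log n}$.

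\textbf{Extraction of a near-diagonal principal submatrix.} Set $k=\lfloor n^{1/4}/(CL)\rfloor$ and draw $I\subset[n]$ of size $k$ uniformly at random. Cauchy--Schwarz and the row bound give, for each fixed $i$,
\[
\EE\Big[\sum_{j\in I,\,j\ne i}|A_{ij}|\Big]\le\frac{k}{n}\sqrt n\cdot CL\sqrt{\log n}=O(\sqrt{\log n}/n^{1/4}),
\]
so Markov plus a union bound (or a greedy thinning) produces an $I$ of the claimed size with $\sum_{j\in I,\,j\ne i}|A_{ij}|\le 1/20$ for every $i\in I$. A preliminary dyadic bucketing in the values $\|x_i\|$ ensures additionally that $A_{ii}$ is comparable to a single constant $\alpha$ on $I$, at the cost of only a logarithmic factor in $n$.

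\textbf{Lower bound.} For $y=\sum_{i\in I}a_i x_i$ with $\max|a_i|=1$, choose $j\in I$ with $|a_j|=1$; then
\[
\|y\|\ge|x_j^*(y)|\ge|a_j|A_{jj}-\sum_{i\ne j}|a_i||A_{ji}|\ge\alpha-\tfrac{1}{20}\ge c,
\]
so $\|y\|\ge c\max|a_i|$ for an absolute $c>0$.

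\textbf{Upper bound --- the main obstacle.} The matching bound $\|\sum_{i\in I}a_i x_i\|\le CL\max|a_i|$ is the delicate half. By writing $a_i=\int_0^1\mathrm{sign}(a_i)\mathbf{1}_{\{t\le|a_i|\}}dt$, it suffices to show $\|\sum_{i\in I}\epsilon_i x_i\|\le CL$ uniformly over sign sequences $\epsilon\in\{\pm1\}^I$. Since the hypothesis only controls $\EE\|\sum g_i x_i\|$, I would strengthen the selection in the extraction step to guarantee also that $\EE\|\sum_{i\in I}g_i x_i\|$ stays of order $L$ (rather than $L\sqrt{\log n}$), and then invoke Gaussian concentration of $g\mapsto\|\sum_{i\in I}g_i x_i\|$ together with a union bound over the $2^k$ sign patterns. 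Passing from averaged to worst-case sign sums under (\ref{eq:log}) is the principal difficulty; the exponent $1/4$ in $k\ge n^{1/4}/(CL)$ reflects precisely the budget one has between the row-$\ell_2$ bound $CL\sqrt{\log n}$, the cost of diagonal dominance, and the requirements of this concentration step.
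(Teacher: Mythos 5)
Your proposal is incomplete, and the gap you flag --- the upper bound $\|\sum_{i\in I}a_i x_i\|\le CL\max|a_i|$ --- is not a loose end but the entire content of the proposition. The ingredients you do have are correct: the row-$\ell_2$ bound $(\sum_j A_{ij}^2)^{1/2}\lesssim L\sqrt{\log n}$ and the one-coordinate symmetrization giving $\|x_i\|\lesssim L\sqrt{\log n}$ both follow cleanly from the hypothesis, and the diagonal-dominance selection does give a lower bound $\|\sum_{i\in I}a_i x_i\|\ge c\max|a_i|$. But the strategy of producing the $\ell_\infty^k$ subspace as the span of a \emph{subset} of the original $x_i$'s is fundamentally too rigid. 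Taking $a=e_i$ in the desired upper bound forces $\|x_i\|\le CL$ for every $i\in I$, while the hypothesis only guarantees $\|x_i\|\le CL\sqrt{\log n}$; your dyadic bucketing cannot fix this because you might land in a bucket where every norm is $\sim L\sqrt{\log n}$. And for sums, the plan to do ``Gaussian concentration plus a union bound over $2^k$ sign patterns'' stalls immediately: the Lipschitz constant of $g\mapsto\|\sum_{i\in I}g_ix_i\|$ is $\sup_{\|a\|_2=1}\|\sum_{i\in I}a_ix_i\|$, which is not controlled by the hypothesis (it can be as large as $\sqrt{k}\cdot CL\sqrt{\log n}$), so the deviation term overwhelms the mean and no union bound over $2^k$ events closes. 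More basically, an average bound on $\EE\|\sum g_i x_i\|$ cannot, by concentration alone, become a worst-case bound over all sign patterns; these quantities genuinely differ.

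The paper's proof avoids all of this by not working with individual $x_i$'s at all. It partitions $\{1,\dots,n\}$ into $\lfloor\sqrt n\rfloor$ disjoint blocks $\sigma_j$ of size $\lfloor\sqrt n\rfloor$ and forms the Gaussian block vectors $\bar y_j=\sum_{i\in\sigma_j}g_i x_i$. An anti-concentration estimate (Proposition~\ref{prop:smallball}, which rests on the symmetrization argument in Claim~\ref{cl1}) shows that with probability at least $1/2$, a constant fraction of the blocks --- say $j\in J$ with $|J|\gtrsim\sqrt n$ --- satisfy $\|\bar y_j\|\gtrsim\sqrt{\log n}$. Conditioning on this event and symmetrizing with independent signs $r_j$, the hypothesis $\EE\|\sum g_i x_i\|\le L\sqrt{\log n}$ yields a fixed realization of the $\bar y_j$, $j\in J$, for which $\EE_r\|\sum_{j\in J}r_j\bar y_j\|\lesssim L\sqrt{\log n}$; dividing by the block norms, the normalized vectors $y_j=\bar y_j/\|\bar y_j\|$ have $\|y_j\|=1$ and $\EE_r\|\sum_{j\in J}r_j y_j\|\lesssim L$. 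This is exactly the hypothesis of the Alon--Milman theorem (Theorem~\ref{thm:am}), which then produces an $\ell_\infty^k$ subspace with $k\gtrsim\sqrt{|J|}/(CL)\gtrsim n^{1/4}/(CL)$. The moral is that the $\ell_\infty$ structure is not sitting on a subset of the given vectors; it has to be manufactured from linear combinations, and the small-ball estimate is the tool that makes the block sums simultaneously large enough (so normalization costs only $\sqrt{\log n}$) and Rademacher-bounded (so Alon--Milman applies). You should acquaint yourself with Theorem~\ref{thm:am}: it is the black box that replaces the ``upper bound'' step you were unable to carry out.
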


\noindent Let us assume the proposition and continue with the

\begin{proof}[Proof of Theorem \ref{thm_mine}] We start as
in the proof of Theorem \ref{thm_dvoretzky_1}, assuming $B_2^n$ is the
ellipsoid of maximal volume inscribed in the unit ball of
$(\R^n,\|\cdot\|)$. As we already said we may assume $\e^2E^2n\le
\frac {\e}{(\log\frac{1}{\e})^2}{\log n}$ or $E\sqrt{n}\le
\frac{\sqrt{\log n}}{{\sqrt{\e}}\log\frac{1}{\e}}$. Let
$x_1,\dots, x_n$ be the orthonormal basis given by the
Dvoretzky--Rogers Lemma, so that in particular $\|x_i\|\ge 1/10$
for $i=1,\dots,n/2$. It follows from the triangle inequality for the first inequality and from the relation between the distribution of a canonical Gaussian vector and the Haar measure on the sphere that
\[
\EE\Big(\|\sum_{i=1}^{n/2} g_i x_i\|\Big)\le \EE\Big(\|\sum_{i=1}^{n}
g_i x_i\|\Big)\le CE\sqrt{n}
\]
So,
\[
\EE\Big(\|\sum_{i=1}^{n/2} g_i x_i\|\Big)\le \frac{\sqrt{\log
n}}{{\sqrt{\e}}\log\frac{1}{\e}}.
\]
and by Proposition \ref{prop:main} there is a subspace of
$(\R^n,\|\cdot\|)$ of dimension $k\ge \frac{n^{1/4}}{CL}$ which is
$CL$-isomorphic to $\ell_\infty^k$ where
$L=\frac{1}{{\sqrt{\e}}\log\frac{1}{\e}}$. It now follows from an
iteration result of James (see Lemma \ref{lem_james} below and
Corollary \ref{cor_james} following it) that for any $0<\e<1$
there is a subspace of $(\R^n,\|\cdot\|)$ of dimension $k\ge
cn^{\frac{c\e}{\log L}}$ which is $1+\e$ - isomorphic to
$\ell_\infty^k$. $c>0$ is a universal constant. We now use Claim
\ref{cl_inverse} to conclude that $\ell_2^k$ embeds in our space
for some $k \ge \frac{c \log (cn^{\frac{c\e}{\log
L}})}{\log(1/c\varepsilon)}=\frac {c^\prime\e\log
n}{(\log(1/c\varepsilon))^2}$.
\end{proof}

The following simple Lemma is due to R. C. James

\begin{lem}\label{lem_james} let $x_1,\dots,x_m$ be vectors
in some normed space $X$ such that $\|x_i\|\ge 1$ for all $i$ and
\[
\|\sum_{i=1}^ma_ix_i\|\le L\max_{1\le i\le m}|a_i|
\]
for all sequences of coefficients $a_1,\dots,a_m\in\R$. Then $X$
contains a sequence $y_1,\dots,y_{\lfloor\sqrt{m}\rfloor}$
satisfying $\|y_i\|\ge 1$ for all $i$ and
\[
\|\sum_{i=1}^{\lfloor\sqrt{m}\rfloor}a_iy_i\|\le
\sqrt{L}\max_{1\le i\le {\lfloor\sqrt{m}\rfloor}}|a_i|
\]
for all sequences of coefficients
$a_1,\dots,a_{\lfloor\sqrt{m}\rfloor}\in\R$.
\end{lem}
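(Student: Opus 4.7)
The plan is a dichotomy argument based on partitioning $\{1,\dots,m\}$ into $M=\lfloor\sqrt{m}\rfloor$ disjoint blocks $B_1,\dots,B_M$, each of size at most $M$ (so that $M^2\le m$). For each block $B_j$ I ask the following yes/no question: does there exist a combination $z_j=\sum_{i\in B_j}c_i x_i$ with coefficients $\max_{i\in B_j}|c_i|\le 1/\sqrt{L}$ and $\|z_j\|\ge 1$? The threshold $1/\sqrt{L}$ is chosen precisely so the two branches below both produce a $\sqrt{L}$-upper bound.

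If every block admits such a combination, set $y_j=z_j$. Then $\|y_j\|\ge 1$ by construction, and for any coefficients $a_1,\dots,a_M$, expanding gives $\sum_j a_j y_j=\sum_i d_i x_i$ where each $d_i$ is a product $a_{j(i)} c_{i,j(i)}$ coming from the unique block $B_{j(i)}$ containing $i$. Hence $\max_i|d_i|\le \max_j|a_j|/\sqrt{L}$, and applying the hypothesized $L$-upper bound to the combination $\sum_i d_i x_i$ yields $\|\sum_j a_j y_j\|\le L\max_i|d_i|\le \sqrt{L}\max_j|a_j|$, which is what the lemma demands.

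If instead some block $B_{j_0}$ admits no such combination, then by definition every $c=(c_i)_{i\in B_{j_0}}$ with $\max_i|c_i|\le 1/\sqrt{L}$ satisfies $\|\sum_{i\in B_{j_0}}c_i x_i\|<1$, and by rescaling (homogeneity of the norm) this upgrades to the uniform estimate $\|\sum_{i\in B_{j_0}}c_i x_i\|\le \sqrt{L}\max_i|c_i|$ for \emph{all} coefficient vectors. The $M$ vectors $\{x_i\}_{i\in B_{j_0}}$ themselves then serve as $y_1,\dots,y_M$: they automatically inherit $\|x_i\|\ge 1$ from the hypothesis and enjoy the improved $\sqrt{L}$-upper bound. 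The only delicate step is this homogeneity argument in the `no' branch, converting the nonexistence of a single good combination into a uniform norm estimate; the rest is pure bookkeeping and presents no real obstacle.
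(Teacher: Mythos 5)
Your proof is correct and is essentially the paper's own argument, with the two branches of the dichotomy presented in reverse order: the paper first checks whether some block $\sigma_{j_0}$ already satisfies the $\sqrt{L}$ upper bound for all coefficient vectors (your ``no'' branch), and otherwise extracts from each block a normalized combination $y_j=\sum_{i\in\sigma_j}a_ix_i$ with $\|y_j\|=1$ and $\sqrt{L}\max_i|a_i|<1$, which is exactly your ``yes'' branch after a rescaling by $\|z_j\|$. One small slip of wording: in the ``no'' branch you use the $|B_{j_0}|$ vectors $\{x_i\}_{i\in B_{j_0}}$ as the desired sequence, so that block must have exactly $\lfloor\sqrt{m}\rfloor$ elements, not merely ``at most'' $\lfloor\sqrt{m}\rfloor$; state that all blocks have size exactly $\lfloor\sqrt{m}\rfloor$, which is possible because $\lfloor\sqrt{m}\rfloor^2\le m$.
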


\begin{proof} Let $\sigma_j$, $j=1,\dots,{\lfloor\sqrt{m}\rfloor}$
be disjoint subsets of $\{1,\dots,m\}$ each of cardinality $\lfloor\sqrt{m}\rfloor$. If for some $j$
\[
\|\sum_{i\in\sigma_j}a_ix_i\|\le \sqrt{L}\max_{i\in\sigma_j}|a_i|
\]
for all sequences of coefficients, we are done. Otherwise, for
each $j$ we can find a vector $y_j=\sum_{i\in\sigma_j}a_ix_i$ such
that $\|y_j\|=1$ and $\sqrt{L}\max_{i\in\sigma_j}|a_i|<1$. But
then,
\[
\|\sum_{j=1}^{\lfloor\sqrt{m}\rfloor}b_jy_j\|\le L\max_{j,\
i\in\sigma_j}|b_ja_i|\le
L\max_j|b_j|\sqrt{L^{-1}}=\sqrt{L}\max_j|b_j|.
\]
\end{proof}

\begin{cor}\label{cor_james} If $\ell_\infty^m$ $L$-embeds into a
normed space $X$, then for all $0<\e<1$, $\ell_\infty^k$
$\frac{1+\e}{1-\e}$-embeds into $X$ for $k\sim m^{\e/\log L}$.
\end{cor}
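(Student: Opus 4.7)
The plan is to iterate Lemma~\ref{lem_james}. Represent the given $L$-embedding by vectors $x_1^{(0)},\dots,x_m^{(0)}\in X$ normalized so that $\max_i|a_i|\le\|\sum_i a_i x_i^{(0)}\|\le L\max_i|a_i|$. Applying Lemma~\ref{lem_james} once produces $\lfloor\sqrt m\rfloor$ new vectors $x_j^{(1)}$ with $\|x_j^{(1)}\|\ge 1$ and $\|\sum_j b_j x_j^{(1)}\|\le\sqrt L\,\max_j|b_j|$; inspection of its proof shows that each such $x_j^{(1)}$ is a disjointly-supported block combination of the $x_i^{(0)}$'s (either a single basis element, in the ``we are done'' branch, or a combination $\sum_{i\in\sigma_j}a_{j,i}x_i^{(0)}$, in the ``otherwise'' branch). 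Iterating $t$ times produces vectors $x_1^{(t)},\dots,x_{m_t}^{(t)}$ with $\|x_j^{(t)}\|\ge 1$ and $\|\sum_j b_j x_j^{(t)}\|\le L^{1/2^t}\max_j|b_j|$, where $m_t\ge m^{1/2^t}$, and by induction each $x_j^{(t)}$ is a disjointly-supported block combination of the original $x_i^{(0)}$.

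Choose $t=\lceil\log_2(\log L/\log(1+\e))\rceil$, so that $L^{1/2^t}\le 1+\e$. The resulting dimension satisfies
$$k\ge m^{1/2^t}=m^{\log(1+\e)/\log L}\gtrsim m^{\e/\log L}$$
(using $\log(1+\e)\ge \e/2$ for $\e\in(0,1)$; the extra $\tfrac12$ is absorbed into the constant in the statement). The upper bound $\|\sum_j b_j x_j^{(t)}\|\le(1+\e)\max_j|b_j|$ is now automatic.

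For the lower bound, write $x_j^{(t)}=\sum_{i\in T_j}d_{j,i}\,x_i^{(0)}$ with the $T_j$'s disjoint. The original lower bound $\|\sum c_i x_i^{(0)}\|\ge\max_i|c_i|$ then gives
$$\bigg\|\sum_j b_j x_j^{(t)}\bigg\|=\bigg\|\sum_i\Big(\sum_{j:i\in T_j}b_j d_{j,i}\Big)x_i^{(0)}\bigg\|\ge\max_j|b_j|\cdot\max_{i\in T_j}|d_{j,i}|.$$
The main technical obstacle is to verify that the iterated construction can be arranged so that $\max_{i\in T_j}|d_{j,i}|\ge 1-\e$ for every $j$; this requires a careful inductive tracking of how the normalizations $\|y_j\|=1$ in the proof of Lemma~\ref{lem_james} propagate through the successive block decompositions (possibly at the cost of a constant factor in the dimension, absorbed into $c$). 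Once this is in hand, combining the upper and lower bounds yields the desired $(1+\e)/(1-\e)$-isomorphism of $\ell_\infty^k$ into $X$.
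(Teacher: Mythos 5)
Your upper-bound iteration matches the paper: applying Lemma~\ref{lem_james} $t$ times (with $t$ chosen so $L^{2^{-t}}\approx 1+\e$) yields $k\sim m^{\e/\log L}$ vectors $x_1,\dots,x_k$ of norm $\ge 1$ with $\|\sum a_i x_i\|\le(1+\e)\max|a_i|$. The problem is the lower bound, where you part ways with the paper and the argument does not go through.

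You try to unwind the iteration back to the original basis, writing $x_j^{(t)}=\sum_{i\in T_j}d_{j,i}x_i^{(0)}$ with the $T_j$ disjoint and invoking the original embedding's lower bound; this reduces the lower bound to showing $\max_{i\in T_j}|d_{j,i}|\ge 1-\e$ for every $j$, which you flag as the ``main technical obstacle.'' This step is not merely hard to verify --- it is not true in general. The only lower bound on the top coefficient that the iteration gives you is $\max_{i\in T_j}|d_{j,i}|\ge \|x_j^{(t)}\|/L\ge 1/L$ (from $\|x_j^{(t)}\|\ge 1$ and the original upper bound $\|\sum c_i x_i^{(0)}\|\le L\max_i|c_i|$), and $1/L$ can be far smaller than $1-\e$. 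Indeed, in the ``otherwise'' branch of Lemma~\ref{lem_james} the new vectors $y_j=\sum_{i\in\sigma_j}a_ix_i$ are normalized to $\|y_j\|=1$ under the explicit constraint $\max_{i\in\sigma_j}|a_i|<L^{-1/2}$, so each iteration can genuinely flatten the coefficient profile rather than preserve a large coordinate. Nothing in the construction forces any coefficient to stay near $1$, so your reduction leaves an unfillable gap; at best it would give a lower bound $\|\sum b_j x_j^{(t)}\|\ge L^{-1}\max|b_j|$, i.e.\ an $L(1+\e)$-isomorphism, far from the claim.

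The paper gets the lower bound with no reference to the original basis at all, using only two facts about the output vectors themselves: $\|x_i\|\ge 1$ and the already-proved upper bound. Assuming WLOG $a_1=\max_i|a_i|>0$, write
\[
\bnorm{\sum_{i=1}^k a_i x_i}=\bnorm{2a_1x_1-\Big(a_1x_1-\sum_{i=2}^k a_i x_i\Big)}\ge 2a_1\|x_1\|-\bnorm{a_1x_1-\sum_{i=2}^k a_ix_i}\ge 2a_1-(1+\e)a_1=(1-\e)\max_i|a_i|,
\]
where the last inequality uses $\|x_1\|\ge 1$ and the upper bound applied to the coefficient sequence $(a_1,-a_2,\dots,-a_k)$. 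This is the step you should use in place of the coefficient-tracking argument; it is self-contained, exact, and gives the stated $\frac{1+\e}{1-\e}$ constant with no loss in $k$.
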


\begin{proof}
By iterating the Lemma (pretending for the sake of simplicity of notation that $m^{2^{-s}}$ is an integer for all the relevant $s$-s), for all positive integer $t$ there is
a sequence of length $k=m^{2^{-t}}$ of norm one vectors
$x_1,\dots,x_k$ in $X$ satisfying
\[
\|\sum_{i=1}^ka_ix_i\|\le L^{2^{-t}}\max|a_i|
\]
for all coefficients. Pick a $t$ such that $L^{2^{-t}}=1+\e$
(approximately); i.e., $2^{-t}=\frac{\log 1+\e}{\log L}\sim
\frac{\e}{\log L}$. Thus $k\sim m^{\e/\log L}$ and
\[
\|\sum_{i=1}^ka_ix_i\|\le (1+\varepsilon)\max|a_i|.
\] 
To get a
similar lower bound on $\|\sum_{i=1}^ka_ix_i\|$, assume without
loss of generality that $\max|a_i|=a_1$. Then
\[
\begin{array}{rl}
\|\sum_{i=1}^ka_ix_i\|=&\|2a_1x_1-(a_1x_1-\sum_{i=2}^ka_ix_i)\|\ge
2a_1-\|a_1x_1-\sum_{i=2}^ka_ix_i\|\\
\ge& 2a_1-(1+\e)a_1=(1-\e)\max|a_i|.
\end{array}
\]
\end{proof}

We are left with the task of proving Proposition \ref{prop:main}. We begin with

\begin{claim}\label{cl1}
Let $x_1,\dots,x_n$ be normalized vectors in a normed space. Then
for all real $a_1,\dots,a_n$,
\[
{\rm Prob}_{\e_i=\pm 1}(\|\sum_{i=1}^n \e_ia_ix_i\|<\max_{1\le i\le n}|a_i|)\le 1/2.
\]
\end{claim}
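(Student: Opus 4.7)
The statement reduces to showing that with probability at least $1/2$ over the signs $\e_i$, the norm $\|\sum \e_i a_i x_i\|$ is $\ge \max_i |a_i|$. My plan is to fix the index attaining the maximum and use the parallelogram-style consequence of the triangle inequality in the remaining coordinate.

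First I would set up notation: without loss of generality (by permuting indices and absorbing signs into the $a_i$'s, which does not affect the distribution of $\e = (\e_1,\dots,\e_n)$) assume $\max_i |a_i| = a_1 \ge 0$. Next I would condition on $\e_2, \dots, \e_n$ and write $y = \sum_{i=2}^n \e_i a_i x_i$, so that the random vector in question becomes $\e_1 a_1 x_1 + y$ with $\e_1 = \pm 1$ uniform and independent of $y$.

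The key step is the elementary triangle inequality
\[
\|a_1 x_1 + y\| + \|{-}a_1 x_1 + y\| \ge \|(a_1 x_1 + y) - ({-}a_1 x_1 + y)\| = 2 a_1 \|x_1\| = 2a_1,
\]
using $\|x_1\| = 1$. Consequently at least one of the two numbers $\|a_1 x_1 + y\|$, $\|{-}a_1 x_1 + y\|$ is $\ge a_1$, so
\[
\Pr_{\e_1}\bigl(\|\e_1 a_1 x_1 + y\| \ge a_1 \,\big|\, \e_2,\dots,\e_n\bigr) \ge \tfrac12.
\]
Averaging over $\e_2,\dots,\e_n$ gives $\Pr(\|\sum \e_i a_i x_i\| \ge \max_i|a_i|) \ge 1/2$, which is the complementary form of the desired bound.

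There is no real obstacle here; the claim is a one-line application of the triangle inequality in a conditional form. The only thing to be careful about is the strict-versus-nonstrict inequality bookkeeping: the triangle inequality gives a nonstrict bound $\ge a_1$ on at least one of the two sign choices, which corresponds exactly to the complement of the strict event $\{\|\cdot\| < \max_i|a_i|\}$ having probability at least $1/2$.
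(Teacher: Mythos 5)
Your proof is correct and uses essentially the same idea as the paper's: the triangle inequality shows that flipping a single sign takes the event $\{\|\sum\e_ia_ix_i\|<\max|a_i|\}$ into the event $\{\|\sum\e_ia_ix_i\|>\max|a_i|\}$, hence the former has probability at most $1/2$. The only cosmetic difference is that you flip $\e_1$ while conditioning on $\e_2,\dots,\e_n$, whereas the paper fixes $\e_1$ and flips the tail $\e_2,\dots,\e_n$; both are the same measure-preserving involution in disguise.
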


\begin{proof} Assume as we may $a_1=\max_{1\le i\le n}|a_i|$. If
$\|a_1x_1+\sum_{i=2}^n \e_ia_ix_i\|<a_1$ then
\[
\|a_1x_1-\sum_{i=2}^n \e_ia_ix_i\|\ge 2a_1-\|a_1x_1+\sum_{i=2}^n
\e_ia_ix_i\|>a_1
\] and thus
\[
P(\|\sum_{i=1}^n \e_ia_ix_i\|>a_1)\ge P(\|\sum_{i=1}^n
\e_ia_ix_i\|<a_1).
\]
So,
\begin{equation*}
\begin{array}{rl}
1\ge&P(\|\sum_{i=1}^n \e_ia_ix_i\|\not=\max|a_i|)\cr
=&P(\|\sum_{i=1}^n \e_ia_ix_i\|<a_1)+P(\|\sum_{i=1}^n
\e_ia_ix_i\|>a_1)\cr \ge& 2P(\|\sum_{i=1}^n \e_ia_ix_i\|<a_1).
\end{array}
\end{equation*}
\end{proof}

\begin{remark} If $x_1=x_2$, $a_1=a_2=1$ and $a_3=\dots=a_n=0$ then the $1/2$ in the statement of Claim \ref{cl1} cannot be
replaced by any smaller constant.
\end{remark}

\begin{prop}\label{prop:smallball}
Let $x_1,\dots,x_n$ be vectors in a normed space with $\|x_i\|\ge
1/10$ for all $i$ and let $g_1,\dots,g_n$ be a sequence of independent standard Gaussian variables. Then, for $n$ large enough,
\[
P(\|\sum_{i=1}^n g_ix_i\|<\frac{\sqrt{\log n}}{100})\le 2/3.
\]
\end{prop}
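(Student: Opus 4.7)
The strategy is to decompose $g_i=\varepsilon_i|g_i|$, where $\varepsilon_1,\dots,\varepsilon_n$ are i.i.d.\ symmetric $\pm1$ signs independent of the magnitudes $|g_1|,\dots,|g_n|$. Conditional on the vector $(|g_j|)_{j=1}^n$, the sum $\sum g_ix_i=\sum\varepsilon_i|g_i|x_i$ has only the signs as its randomness, putting us exactly in a position to apply Claim~\ref{cl1}.

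To do so (the claim requires normalized vectors) I first rescale: set $y_i=x_i/\|x_i\|$ and $b_i=|g_i|\|x_i\|$, so that $\sum\varepsilon_i|g_i|x_i=\sum\varepsilon_ib_iy_i$. Claim~\ref{cl1} then yields, conditionally on the magnitudes,
\[
P_\varepsilon\Bigl(\bigl\|\textstyle\sum_{i=1}^n\varepsilon_ib_iy_i\bigr\|\ge\max_i b_i\Bigr)\ge\tfrac12.
\]
Since $\|x_i\|\ge 1/10$ for every $i$, letting $i^\ast$ be any index where $|g_{i^\ast}|=\max_i|g_i|$, we get $\max_i b_i\ge b_{i^\ast}\ge\tfrac{1}{10}\max_i|g_i|$. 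Hence, conditionally on $(|g_j|)_{j=1}^n$,
\[
P\Bigl(\bigl\|\textstyle\sum_{i=1}^n g_ix_i\bigr\|\ge\tfrac{1}{10}\max_i|g_i|\,\Big|\,(|g_j|)_j\Bigr)\ge\tfrac12.
\]

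The remaining ingredient is a high-probability lower bound on $\max_i|g_i|$. The standard Gaussian tail estimate gives, for every fixed $c<\sqrt{2}$, that $P(|g_1|<c\sqrt{\log n})^n\to 0$ as $n\to\infty$; applying this with $c=1/10$ yields $P(\max_i|g_i|\ge\tfrac{1}{10}\sqrt{\log n})\ge\tfrac23$ for all sufficiently large $n$. Averaging the previous conditional bound over this event gives
\[
P\Bigl(\bigl\|\textstyle\sum_{i=1}^n g_ix_i\bigr\|\ge\tfrac{\sqrt{\log n}}{100}\Bigr)\ge\tfrac12\cdot\tfrac23=\tfrac13,
\]
which is equivalent to the desired conclusion $P(\|\sum g_ix_i\|<\sqrt{\log n}/100)\le 2/3$. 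There is no real obstacle in the argument; the only care required is to keep the conditioning clean (applying Claim~\ref{cl1} in its ``signs only'' form after conditioning on the magnitudes) and to track the factor $1/10$ introduced by the rescaling from the hypothesis $\|x_i\|\ge 1/10$.
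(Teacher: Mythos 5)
Your proof is correct and follows essentially the same route as the paper: decompose $g_i=\varepsilon_i|g_i|$, apply Claim~\ref{cl1} conditionally on the magnitudes (absorbing the $\|x_i\|$'s into the coefficients so the claim's normalization hypothesis holds), and finish with the standard Gaussian tail bound showing $\max_i|g_i|\gtrsim\sqrt{\log n}$ with probability tending to $1$. The only cosmetic difference is that the paper runs the estimate in terms of the bad event via a union bound, whereas you lower-bound the good event by averaging the conditional $\tfrac12$ bound over the high-probability set where $\max_i|g_i|\ge\tfrac1{10}\sqrt{\log n}$; the two bookkeepings are equivalent.
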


\begin{proof}
Note first that it follows from Claim \ref{cl1} that
\begin{equation}\label{eq:gau}
P(\|\sum_{i=1}^n g_ix_i\|<\max_{1\le i\le
n}|g_i|\|x_i\|)\le \frac12.
\end{equation}
This is easily seen by noticing that $(g_1\dots,g_n)$ is distributed identically to 
$(\varepsilon_1|g_1|\dots,\varepsilon_n|g_n|)$ where $\varepsilon_1\dots,\varepsilon_n$ are independent random signs independent of the $g_i$-s. Now compute 
\[
P(\|\sum_{i=1}^n \varepsilon_i|g_i|x_i\|<\max_{1\le i\le
n}|g_i|\|x_i\|)
\]
by first conditioning on the $g_i$-s. We use (\ref{eq:gau}) in the following sequence of inequalities.

\begin{equation*}
\begin{array}{rl}
&P(\|\sum_{i=1}^n g_ix_i\|<\frac{\sqrt{\log n}}{100})\cr
&\phantom{aaa}\le P(\|\sum_{i=1}^n g_ix_i\|<\frac{\sqrt{\log
n}}{100} \ \&\ \frac{\sqrt{\log n}}{100}<\max_{1\le i\le
n}|g_i|\|x_i\|)\cr &\phantom{aaaaaa}+P(\max_{1\le i\le
n}|g_i|\|x_i\|\le\frac{\sqrt{\log n}}{100})\cr &\phantom{aaa}\le
P(\|\sum_{i=1}^n g_ix_i\|<\max_{1\le i\le
n}|g_i|\|x_i\|)+P(\max_{1\le i\le n}|g_i|\le\frac{\sqrt{\log
n}}{10})\cr &\phantom{aaa}\le \frac12+(1-e^{-c\log n})^n\ \quad
\quad \quad \quad \quad \quad \mbox{for $n$ large enough}\cr
&\phantom{aaa}\le \frac12+e^{-n^{1-c}}\le \frac23.
\end{array}
\end{equation*}
\end{proof}
 In the proof of Proposition \ref{prop:main} we shall use a theorem of Alon and Milman \cite{am} (see \cite{ta} for a simpler proof) which have a very similar statement: Gaussians are replaced by random signs and $\sqrt {\log n}$ by a constant.

 \begin{thm}\label{thm:am}(Alon and Milman) Let $(X,\|\cdot\|)$ be a normed space and let $x_1,\dots,x_{n}$ be
a  sequence in $X$ satisfying $\|x_i\|\ge1$ for all $i$ and
\begin{equation}\label{eq:log}
\EE_{\e_i=\pm 1}\Big(\|\sum_{i=1}^{n} \e_i x_i\|\Big)\le L.
\end{equation}
Then, there is a subspace of $X$ of dimension $k\ge
\frac{n^{1/2}}{CL}$ which is $CL$-isomorphic to $\ell_\infty^k$.
$C$ is a universal constant.
\end{thm}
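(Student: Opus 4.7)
The plan is to mirror the structure of the proof of Proposition~\ref{prop:main}, with Claim~\ref{cl1} playing the role of Proposition~\ref{prop:smallball}. Specifically, I will construct disjoint blocks $B_1,\dots,B_k\subset\{1,\dots,n\}$ together with signs $\varepsilon^{(j)}\in\{-1,+1\}^{B_j}$ so that the partial sums $S_j:=\sum_{i\in B_j}\varepsilon_i^{(j)}x_i$ form a $CL$-equivalent basis for $\ell_\infty^k$. First I would use Kahane's vector-valued inequality to get $(\EE\|\sum\varepsilon_i x_i\|^2)^{1/2}\le cL$, and extract a single coordinate (by integrating the remaining signs) to obtain $\|x_j\|\le cL$ for every $j$. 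So all the vectors have comparable norms in $[1,cL]$.

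Set $k=\lfloor n^{1/2}/(CL)\rfloor$ and partition $\{1,\dots,n\}$ into $k$ equal-size blocks $B_1,\dots,B_k$. By the contraction principle (integrating out signs outside $B_j$), $\EE_{\varepsilon_{B_j}}\|\sum_{i\in B_j}\varepsilon_i x_i\|\le L$, so by Markov $\P(\|S_j\|>4L)\le 1/4$. Meanwhile Claim~\ref{cl1} applied inside $B_j$ (using any $i\in B_j$ with $\|x_i\|\ge1$) gives $\P(\|S_j\|<1)\le 1/2$. Thus for random signs, each block-sum lies in $[1,4L]$ with probability $\ge 1/4$.

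The heart of the argument is to pick signs simultaneously across all $k$ blocks so that, in addition to each $\|S_j\|\in[1,4L]$, one has $\max_{\delta\in\{-1,+1\}^k}\|\sum_j\delta_j S_j\|\le CL$. For fixed $\delta$ the sum $\sum_j\delta_j S_j=\sum_i(\delta_{j(i)}\varepsilon_i)x_i$ has the same distribution as $\sum_i\varepsilon_i x_i$ (since $\delta_{j(i)}\varepsilon_i$ is still a uniform sign), so by Talagrand's concentration inequality for convex Lipschitz functions on the discrete cube, whose sub-Gaussian parameter is bounded by $\sigma^2=\sup_{\phi\in B_{X^*}}\sum_i\phi(x_i)^2\le\EE\|\sum\varepsilon_i x_i\|^2\le cL^2$, we have $\P(\|\sum_i\varepsilon_i x_i\|>L+t)\le 2e^{-ct^2/L^2}$. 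Union-bounding the tail over the $2^k$ sign patterns $\delta$ and over the $k$ block events shows that with positive probability all the desired inequalities hold; this succeeds precisely when $k\lesssim n^{1/2}/L$. Having fixed such signs, the upper bound $\|\sum_j a_j S_j\|\le CL\max|a_j|$ follows by convexity, while the lower bound $\|\sum_j a_j S_j\|\ge c\max|a_j|$ is extracted via Hahn--Banach functionals $\phi_j$ with $\phi_j(S_j)=\|S_j\|\ge 1$, $\|\phi_j\|\le 1$, whose cross-values $\phi_j(S_l)$ ($l\ne j$) are small by the same concentration, making the matrix $(\phi_j(S_l))$ diagonally dominant after a pruning step.

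The main obstacle is the simultaneous control in the Talagrand step: balancing the number of tail events ($2^k$ sign patterns $\delta$ plus $k$ block events) against the sub-Gaussian parameter $\sigma=O(L)$ is exactly what forces the dimension bound $k\ge n^{1/2}/(CL)$. A secondary delicate point is ensuring the biorthogonal-type functionals $\phi_j$ produce a clean $CL$-isomorphism rather than an extra $\sqrt{k}$ factor; this likely requires a further pruning of the block collection (discarding a small proportion of blocks on which the off-diagonal cross-values $\phi_j(S_l)$ are atypically large), at the cost of only absolute constants in $k$.
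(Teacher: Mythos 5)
The paper gives no proof of this theorem: it cites Alon and Milman \cite{am} (and Talagrand \cite{ta} for a simpler proof) and uses the statement as a black box in the proof of Proposition~\ref{prop:main}. So there is no in-paper proof to compare against, and your argument must stand on its own.

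It does not, and the gap is in the union-bound step. You apply Talagrand's convex-Lipschitz concentration to get, for each fixed sign pattern $\delta\in\{-1,1\}^k$,
\[
\PP\Big(\big\|\sum_j\delta_j S_j\big\|>L+t\Big)\le 2e^{-ct^2/L^2},
\]
with sub-Gaussian parameter $\sigma\asymp L$ \emph{independent of $n$ and $k$}. To survive a union bound over the $2^k$ patterns $\delta$ you must take $t\gtrsim L\sqrt{k}$, so the most you can conclude is $\max_\delta\|\sum_j\delta_j S_j\|\lesssim L\sqrt{k}$. With $k\asymp n^{1/2}/(CL)$ this is $\asymp n^{1/4}\sqrt{L}$, and your upper estimate in the $\ell_\infty^k$-equivalence becomes $c\,n^{1/4}\sqrt{L}\,\max_j|a_j|$, not $CL\max_j|a_j|$. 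The sentence that this ``succeeds precisely when $k\lesssim n^{1/2}/L$'' has no basis: the concentration rate $L$ does not scale with $n$ or $k$, so no choice of $k$ larger than an absolute constant avoids the $\sqrt{k}$ loss. This is not a matter of tightening constants; it is the reason the real proofs do something different. They work with the trace set $T=\{(\phi(x_1),\dots,\phi(x_n)):\phi\in B_{X^*}\}\subset\RN$, note (exactly your $\sigma$-bound, plus Kahane) that $\sup_{t\in T}\|t\|_2\lesssim L$ while each coordinate of $T$ attains value $\ge 1$ because $\|x_i\|\ge 1$, and then extract $\gtrsim n^{1/2}/L$ coordinates on which $T$ contains a combinatorial cube of the right scale via a Sauer--Shelah/shattering argument (\cite{am}) or Talagrand's counting argument (\cite{ta}). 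That cube yields the $\ell_\infty$-structure with a genuinely $O(L)$ constant; no tail union bound over $2^k$ sign patterns appears.

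A secondary concern, which you yourself flag, is the biorthogonal lower bound: from $\sum_l\phi_j(S_l)^2\lesssim L^2$ you only get $\ell_2$-control on the off-diagonal row, and converting this to the $\ell_1$-type diagonal dominance $\sum_{l\ne j}|\phi_j(S_l)|\le 1/2$ after pruning is essentially the combinatorial heart of the theorem rather than a routine cleanup; left as a one-line remark it leaves the argument incomplete even before the union-bound issue.
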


{\em Proof of Proposition \ref{prop:main}.} Let $\sigma_1,\dots,\sigma_{\lfloor
\sqrt{n}\rfloor}\subset\{1,\dots,n\}$ be disjoint
 with $|\sigma_j|=\lfloor
\sqrt{n}\rfloor$ for all $j$. We'll show that there is a subset
$J\subset\{1,\dots,\lfloor \sqrt{n}\rfloor\}$ of cardinality at
least $\frac{\sqrt{n}}{4}$
 and there are
$\{y_j\}_{j\in J}$ with $y_j$ supported on $\sigma_j$ such that
$\|y_j\|=1$ for all $j\in J$ and
\[
\EE_{\e_i=\pm 1}\Big(\|\sum_{j\in J} \e_j y_j\|\Big)\le 80L.
\]
We then apply the theorem above.

To show this notice that the events $\|\sum_{i\in \sigma_j} g_ix_i\|<\frac{\sqrt{\log n}}{200}$, $j=1,\dots, \lfloor
\sqrt{n}\rfloor$, are independent and by Proposition \ref{prop:smallball} have probability at most $2/3$ each. So with probability  at least $1/2$ there is a subset  $J\subset \{1,\dots, \lfloor
\sqrt{n}\rfloor\}$ with $|J|\ge \frac {\lfloor
\sqrt{n}\rfloor}{4}$ such that $\|\sum_{i\in \sigma_j} g_i x_i\|>
\frac{1}{200}\sqrt{\log n}$ for all $j\in J$. Denote the event that
such a $J$ exists by $A$. Let $\{r_j\}_{j=1}^{\lfloor
\sqrt{n}\rfloor}$ be a sequence of independent signs independent of the
original Gaussian sequence. We get that
\[
\begin{array}{rl}
L\sqrt{\log n}\ge& \EE_g\Big(\|\sum_{j=1}^{\lfloor
\sqrt{n}\rfloor}\sum_{i\in\sigma_j}g_ix_i\|\Big)=
\EE_r\EE_g\Big(\|\sum_{j=1}^{\lfloor
\sqrt{n}\rfloor}r_j\sum_{i\in\sigma_j}g_ix_i\|\Big)\\
\ge&\EE_r\EE_g\Big(\|\sum_{j=1}^{\lfloor
\sqrt{n}\rfloor}r_j\sum_{i\in\sigma_j}g_ix_i\|{\bf 1}_A\Big)\\
\ge&
\frac12 \EE_g\Big(\Big(E_r\|\sum_{j=1}^{\lfloor
\sqrt{n}\rfloor}r_j\sum_{i\in\sigma_j}g_ie_i\|\Big)\Big/ A\Big).
\end{array}
\]
It follows that for some $\omega\in A$,  there exists a $J\subset
\{1,\dots, {\lfloor \sqrt{n}\rfloor}\}$ with $|J|\ge
\frac{{\lfloor \sqrt{n}\rfloor}}{4}$ such that putting $\bar
y_j=\sum_{i\in\sigma_j}g_i(\omega)x_i$, one has $\|\bar
y_j\|>\frac{1}{200}\sqrt{\log n}$ for all $j\in J$ and
\[
\EE_r\Big(\|\sum_{j\in J}r_j\bar y_j\|\Big)\le 2L\sqrt{\log n}.
\]
Take $y_j=\bar y_j/\|\bar y_j\|$.
\qed

\bigskip

In the list of references below we included also some books and expository papers not directly referred to in the text above.

\noindent Gideon Schechtman\newline Department of
Mathematics\newline Weizmann Institute of Science\newline Rehovot,
Israel\newline E-mail: gideon.schechtman@weizmann.ac.il

\end{document}